\newtheorem{proposition}{Proposition}
\newtheorem{corollary}{Corollary}
\newtheorem{theorem}{Theorem}[section]
\newtheorem{lemma}{Lemma}
\theoremstyle{definition}
\newtheorem{definition}{Definition}
\theoremstyle{remark}
\newtheorem*{remark}{Remark}
    \let\@fnsymbol\@arabic
\title{Best $L_1$ approximation of Heaviside-type functions in Chebyshev and weak-Chebyshev spaces}%
\author{\setcounter{footnote}{0} Laurent Gajny\footnote{Arts et M\'etiers ParisTech, LSIS - UMR CNRS 7296, 8 Boulevard Louis XIV, 59046 Lille Cedex, France}, Olivier Gibaru$^{1,}$\footnote{INRIA Lille-Nord-Europe, NON-A research team, 40, avenue Halley 59650 Villeneuve d'Ascq}, Eric Nyiri$^1$, Shu-Cherng Fang\footnote{North Carolina State University, Edward P. Fitts Department of Industrial and Systems Engineering, NC 27695-7906, United States of America.}}%
\date{}%
\begin{document}
  \maketitle
\begin{abstract}
In this article, we study the problem of best $L_1$ approximation of Heaviside-type functions in Chebyshev and weak-Chebyshev spaces. We extend the Hobby-Rice theorem \cite{HobbyRice1965} into an appropriate framework and prove the unicity of best $L_1$ approximation of Heaviside-type functions in an even-dimensional Chebyshev space under the condition that the dimension of the subspace composed of the even functions is half the dimension of the whole space. We also apply the results to compute best $L_1$ approximations of Heaviside-type functions by polynomials and Hermite polynomial splines with fixed knots.\\

\textbf{Keywords :} Best approximation, $L_{1}$ norm, Heaviside function, polynomials, polynomial splines, Chebyshev space, weak-Chebyshev space.
\end{abstract}

\section*{Introduction}

Let $[a,b]$ be a real interval with $a<b$ and $\nu$ be a positive measure defined on a $\sigma$-field of subsets of $[a,b]$. The space $L_1([a,b],\nu)$ of $\nu$-integrable functions with the so-called $L_1$ norm :
 \[\Vert \cdot \Vert_1 : f\in L_1([a,b],\nu)\longmapsto \Vert f \Vert_1 = \int_a^b \vert f(x)\vert \ \mathrm{d}\nu(x)\]
forms a Banach space. Let $f\in L_1([a,b],\nu)$ and $Y$ be a subset of $L_1([a,b],\nu)$ such that $f\notin \overline{Y}$ where $\overline{Y}$ is the closure of $Y$.  The problem of the best $L_1$ approximation of the function $f$ in $Y$ consists in finding a function $g^*\in Y$ such that :
\begin{equation}
  \Vert f-g^*\Vert_1 \le \Vert f-g\Vert_1, \quad \forall g\in Y.
\end{equation}
If $Y$ is a compact set or a finite dimensional subspace of $L_1([a,b],\nu)$, then such a function exists for any $f\in L_1([a,b],\nu)$. In this article, we focus on the finite-dimensional case. An important characterization theorem when $\nu$ is a non-atomic measure (See for example \cite{James1947,KripkeRivlin1965} and a development in \cite{Pinkus1989}) claims that finding a best $L_1$ approximation of a function $f$ remains to define a sign-changes function $s$ (\textit{i.e.} a function for which values alternate between -1 and 1 at certain abscissae) such that :
\[\int_a^b s(x)g(x)\ \mathrm{d}\nu(x)=0, \quad \forall g \in Y.\]
The Hobby-Rice theorem \cite{HobbyRice1965} enables to show the existence of such functions. This major result helps characterize and compute solutions of best $L_1$ approximation of a continuous function in Chebyshev and weak-Chebyshev spaces. It has been widely studied in the litterature \cite{Kammler1979,Micchelli1977,Peherstorfer1979,Rice1964,Strauss1984,Usow1967}. We recall that an $n$-dimensional Chebyshev space (resp. weak-Chebyshev space) on $[a,b]$ is a subspace of $C^0[a,b]$ composed of functions which have at most $n-1$ distinct zeros on $[a,b]$ (resp. strong changes of sign) \cite{Karlin1966,Schumaker1981}. Classical examples of such spaces are respectively polynomial functions and polynomial spline functions with fixed knots.\\
To the best of our knowledge, there are few references in the litterature that deal with the challenging problem of best $L_1$ approximation of discontinuous functions, especially Heaviside-type functions.

 \begin{definition}
We say that $f$ is a Heaviside-type function on $[a,b]$ with a jump at $\delta\in (a,b)$ if $f\in C^0([a,b]\backslash\{\delta\})$ and if the limits of the function $|f|$ on both sides of $\delta$ exist and are finite.\\
We denote by $\mathcal{J}_\delta[a,b]$ the vector space of such functions.
 \end{definition}

 Existing papers focusing on one or two-sided best $L_1$ approximation deal with specific spaces. Bustamante \emph{et al.} have shown that best polynomial one-sided $L_1$ approximation of the Heaviside function can be obtained by Hermite interpolation at zeros of some Jacobi polynomials \cite{Bustamante2012}. Moskona \emph{et al.} have studied the problem of best two-sided $L_p$ ($p\ge1$) approximation of the Heaviside function using trigonometrical polynomials \cite{Moskona1995}. Saff and Tashev have done a similar work using polygonal lines \cite{SaffTashev1999}. We propose in this article a more general framework for best (two-sided) $L_1$ approximation of Heaviside-type functions in Chebyshev and weak-Chebyshev spaces which includes the two previous cases. This problem is presented in Section 1. We evidence the encountered difficulties that lead us to extend the classical Hobby-Rice theorem. This extension is presented in details in Section 2. In the third section, we strongly use this result to characterize best $L_1$ approximations of Heaviside-type functions in a Chebyshev space. We give sufficient conditions on the Chebyshev space to obtain a unique $L_1$ best approximation for every Heaviside-type function. We apply these results to polynomial approximation. In Section 4, we study the problem of best $L_1$ approximation of Heaviside-type functions in a weak-Chebyshev space. In particular, this theory is applied to the Hermite polynomial spline case in Section 4.

\section{Best $L_1$ approximation}

In this section, we recall an important characterization theorem of best $L_1$ approximation (See for example \cite{James1947,KripkeRivlin1965}) and the Hobby-Rice theorem. We evidence the reason why the Hobby-Rice theorem fails to give a more precise characterization of best $L_1$ approximations of Heaviside-type functions.
For convenience, we define the zero set $Z(f)=\{x\in [a,b],\ f(x)=0\}$ and the sign function :
\[\text{sign} : x\in \mathbf{R} \longmapsto
\text{sign}(x)=\left\{ \begin{tabular}{rc}
$1$ & if $x>0$,\\
$0$ & if $x=0,$\\
$-1$& if $x<0.$\\
\end{tabular}\right.
\]
\begin{theorem}\label{th_caract}
  Let $Y$ be a subspace of $L_1([a,b],\nu)$ and $f\in L_1([a,b],\nu)\backslash \overline{Y}$. Then $g^*$ is a best $L_1$ approximation of $f$ in $Y$ if and only if :
  \begin{equation}\label{ineg_caract}
    \left|\int_a^b \mathrm{sign}(f(x)-g^*(x))g(x)\ \mathrm{d}\nu(x)\right| \le \int_{Z(f-g^*)} \vert g(x)\vert \ \mathrm{d}\nu(x), \quad \text{for all } g\in Y.
  \end{equation}
In particular, if $\nu(Z(f-g^*))=0$, then $g^*$ is a best $L_1$ approximation of $f$ in $Y$ if and only if :
  \begin{equation}\label{eg_caract}
    \int_a^b \mathrm{sign}(f(x)-g^*(x))g(x)\ \mathrm{d}\nu(x)=0,
  \end{equation}
  for all $g\in Y$.\\
\end{theorem}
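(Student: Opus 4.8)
The plan is to reduce to the case $g^*=0$ and then prove the equivalence by two matching one--sided estimates on the map $t\mapsto\|h-tg\|_1$, where $h:=f-g^*$. Since the translation $g\mapsto g-g^*$ is a bijection of $Y$ onto itself, $g^*$ is a best $L_1$ approximation of $f$ in $Y$ if and only if $0$ is a best $L_1$ approximation of $h$ in $Y$; moreover $\mathrm{sign}(f-g^*)=\mathrm{sign}(h)$ and $Z(f-g^*)=Z(h)$. Hence it suffices to show that $0$ is a best approximation of $h$ in $Y$ if and only if $\left|\int_a^b\mathrm{sign}(h(x))g(x)\,\mathrm{d}\nu(x)\right|\le\int_{Z(h)}|g(x)|\,\mathrm{d}\nu(x)$ for every $g\in Y$. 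In every computation I split $[a,b]$ into $Z(h)$ and its complement, using that $h\equiv0$ and $\mathrm{sign}(h)\equiv0$ on $Z(h)$, while $\mathrm{sign}(h)h=|h|$ on $[a,b]\setminus Z(h)$.

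For the sufficiency, fix $g\in Y$ and use the pointwise bound $|h-g|\ge\mathrm{sign}(h)(h-g)$ on $[a,b]\setminus Z(h)$ together with $|h-g|=|g|$ on $Z(h)$ to get
\[
\|h-g\|_1 \ge \int_{[a,b]\setminus Z(h)}\!\!\big(|h|-\mathrm{sign}(h)g\big)\,\mathrm{d}\nu + \int_{Z(h)}|g|\,\mathrm{d}\nu = \|h\|_1 - \int_a^b\mathrm{sign}(h)g\,\mathrm{d}\nu + \int_{Z(h)}|g|\,\mathrm{d}\nu.
\]
By the assumed inequality applied to $g$, the last two terms add up to a nonnegative number, so $\|h-g\|_1\ge\|h\|_1$; as $g\in Y$ is arbitrary, $0$ is a best approximation of $h$.

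For the necessity I argue by contraposition. If the inequality fails for some $g\in Y$, then, replacing $g$ by $-g$ if needed, we may assume $\int_a^b\mathrm{sign}(h)g\,\mathrm{d}\nu>\int_{Z(h)}|g|\,\mathrm{d}\nu$. For $t>0$ one has
\[
\frac{\|h-tg\|_1-\|h\|_1}{t}=\frac1t\int_{[a,b]\setminus Z(h)}\!\!\big(|h-tg|-|h|\big)\,\mathrm{d}\nu+\int_{Z(h)}|g|\,\mathrm{d}\nu,
\]
and on $[a,b]\setminus Z(h)$ the integrand $\tfrac1t\big(|h-tg|-|h|\big)$ converges pointwise to $-\mathrm{sign}(h)g$ as $t\to0^+$ and is dominated by $|g|\in L_1$ via the reverse triangle inequality. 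Dominated convergence then yields
\[
\lim_{t\to0^+}\frac{\|h-tg\|_1-\|h\|_1}{t}=-\int_a^b\mathrm{sign}(h)g\,\mathrm{d}\nu+\int_{Z(h)}|g|\,\mathrm{d}\nu<0,
\]
so $\|h-tg\|_1<\|h\|_1$ for all sufficiently small $t>0$, i.e.\ $0$ is not a best approximation of $h$. Finally, the ``in particular'' clause is immediate: if $\nu(Z(f-g^*))=0$ the right-hand side of \eqref{ineg_caract} vanishes, so \eqref{ineg_caract} collapses to \eqref{eg_caract}.

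The only delicate point is the exchange of limit and integral in the necessity part, and this is precisely where isolating $[a,b]\setminus Z(h)$ matters: on $Z(h)$ the contribution is the exact linear term $t\int_{Z(h)}|g|\,\mathrm{d}\nu$, whereas off $Z(h)$ the function $|g|\in L_1$ supplies the dominating bound. Everything else rests on the elementary inequalities $|u-v|\ge\mathrm{sign}(u)(u-v)$ and $\big||u-tv|-|u|\big|\le t|v|$, so no step is a genuine obstacle (the statement is classical, cf.\ \cite{James1947,KripkeRivlin1965,Pinkus1989}); the proposal is essentially a matter of organizing these estimates.
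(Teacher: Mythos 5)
Your proof is correct. Note, however, that the paper itself gives no proof of this theorem: it is stated as a recalled result with pointers to \cite{James1947,KripkeRivlin1965} and to Pinkus's tract, so there is no internal argument to compare against. What you have written is essentially the standard proof from that literature (it is Theorem 2.1 in \cite{Pinkus1989}): reduce to $g^*=0$ using that $Y$ is a subspace, get sufficiency from the pointwise bound $|h-g|\ge \mathrm{sign}(h)(h-g)$ off $Z(h)$ and $|h-g|=|g|$ on $Z(h)$, and get necessity from the one-sided derivative of $t\mapsto\|h-tg\|_1$ at $t=0^+$. All the individual steps check out: the splitting over $Z(h)$ and its complement is handled consistently, the pointwise limit $\tfrac1t(|h-tg|-|h|)\to-\mathrm{sign}(h)g$ on $\{h\neq 0\}$ is justified (for fixed $x$ the sign of $h(x)-tg(x)$ stabilizes for small $t$), and the domination $\bigl||h-tg|-|h|\bigr|\le t|g|$ legitimizes the use of dominated convergence. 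The only cosmetic remark is that you could avoid dominated convergence altogether by invoking convexity of $t\mapsto\|h-tg\|_1$, which makes the difference quotients monotone in $t$ and lets you use monotone convergence instead; but as written the argument is complete and fills a gap the paper leaves to the references.
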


The second part of the latter theorem claims that finding a best $L_1$ approximation of a function $f$ may remain to find a sign-changes function $s$, that alternates between $-1$ and $1$ at certain abscissae, such that:
\begin{equation}
\int_a^b s(x)g(x)\ \mathrm{d}\nu(x)=0, \quad \forall g \in Y. \label{scf}
\end{equation}
If there exists a function $g^*\in Y$ which interpolates $f$ at these abscissae and no others, then $g^*$ is a best $L_1$ approximation of $f$ in $Y$. Thus, the non-linear problem of best $L_1$ approximation remains to a simple Lagrange interpolation problem.\\
Actually, based on a result by Phelps (Lemma 2 in \cite{Phelps1966}), it can be shown that if $\nu$ is a non-atomic measure then $g^*$ is a best $L_1$ approximation of $f$ in $Y$ if and only if there exists a sign changes function satisfying \eqref{scf} and which coincides $\nu$-almost everywhere with $sgn(f-g^*)$ on $[a,b]\backslash Z(f-g^*)$ (see also Theorem 2.3 in \cite{Pinkus1989}). From now, we consider in this article that we are in this case. Thus, either if a best $L_1$ approximation coincides or not with its reference function on a set of non-zero measure, the determination of a sign-changes function verifying \eqref{scf} is fundamental. Indeed, it characterizes a best $L_1$ approximation in both cases. The Hobby-Rice theorem \cite{HobbyRice1965} enables to show the existence of such a sign-changes function with the number of sign-changes lower or equal to the dimension of $Y$.

\begin{theorem}[Hobby, Rice, 1965]
Let $Y$ be an $n$-dimensional subspace of $L_1([a,b],\nu)$ where $\nu$ is a finite, non-atomic measure. Then there exists a sign-changes function :
\begin{equation}\label{sign_avec_abs}
s(x)=\sum_{i=1}^{r+1}(-1)^i\mathbf{1}_{[\alpha_{i-1},\alpha_{i}[}(x),
\end{equation}
such that for all $g\in Y$ :
\begin{equation}
\label{eq_HR}
 \int_a^b s(x)g(x)\ \mathrm{d}\nu(x)=0,
\end{equation}
 where $r\le n$ and :
 \[ a=\alpha_0<\alpha_1<\alpha_2<\dots<\alpha_r<\alpha_{r+1}=b.\\\]
\label{th_hr}
\end{theorem}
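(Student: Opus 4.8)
The plan is to follow Pinkus \cite{Pinkus1976b} and deduce the statement from the Borsuk--Ulam theorem in the form: \emph{every continuous odd map $S^{n}\to\mathbf{R}^{n}$ vanishes somewhere}. Discarding the trivial case $\nu([a,b])=0$, one may assume after rescaling that $\nu([a,b])=1$, and fix a basis $\{g_1,\dots,g_n\}$ of $Y$. Since $\nu$ is finite and non-atomic, the distribution function $F(x)=\nu([a,x])$ is continuous and non-decreasing with $F(a)=0$, $F(b)=1$, so by the intermediate value theorem every choice $0=\tau_0\le\dots\le\tau_{n+1}=1$ is realised as $\tau_j=F(\xi_j)$ for some $a=\xi_0\le\dots\le\xi_{n+1}=b$.

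First I would attach to each $t=(t_1,\dots,t_{n+1})\in S^{n}=\{t\in\mathbf{R}^{n+1}:\sum_j t_j^2=1\}$ the partition of $[a,b]$ into the $n+1$ consecutive intervals $I_j(t)=[\xi_{j-1}(t),\xi_j(t))$ with $F(\xi_j(t))=t_1^2+\dots+t_j^2$, so that $I_j(t)$ has $\nu$-mass $t_j^2$; assign to $I_j(t)$ the sign $\varepsilon_j(t)=(-1)^j\,\mathrm{sign}(t_j)$ and set
\[
h_i(t)=\sum_{j=1}^{n+1}\varepsilon_j(t)\int_{I_j(t)}g_i(x)\,\mathrm{d}\nu(x),\qquad h=(h_1,\dots,h_n)\colon S^{n}\to\mathbf{R}^{n}.
\]
Oddness is immediate: replacing $t$ by $-t$ leaves every $t_j^2$, hence every $\xi_j(t)$ and every $I_j(t)$, unchanged while flipping each $\mathrm{sign}(t_j)$, so $h(-t)=-h(t)$.

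Next I would check that $h$ is continuous on $S^n$, which I expect to be the only delicate point and which is why the construction is phrased in terms of the masses $t_j^2$ rather than the (possibly non-unique) points $\xi_j(t)$. Let $\widetilde{G}_i$ be the function on $[0,1]$ with $\int_a^x g_i\,\mathrm{d}\nu=\widetilde{G}_i(F(x))$; it is well defined and continuous because $\nu$ is non-atomic (a set of zero $\nu$-mass carries no mass of $|g_i|\,\mathrm{d}\nu$, and $F$ is a continuous surjection of a compact interval). Then $\int_{I_j(t)}g_i\,\mathrm{d}\nu=\widetilde{G}_i(\sigma_j(t))-\widetilde{G}_i(\sigma_{j-1}(t))$ with $\sigma_j(t)=t_1^2+\dots+t_j^2$ continuous in $t$; and although $\varepsilon_j(t)$ jumps where $t_j=0$, there $\sigma_j(t)-\sigma_{j-1}(t)=t_j^2=0$, so that term vanishes. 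Hence each $h_i$, a finite sum of products of a bounded function by a function that vanishes wherever the bounded one is discontinuous, is continuous.

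Finally, by Borsuk--Ulam there is $t^\star\in S^n$ with $h(t^\star)=0$, i.e. $\int_a^b s(x)g_i(x)\,\mathrm{d}\nu(x)=0$ for $i=1,\dots,n$ where $s=\sum_j\varepsilon_j(t^\star)\mathbf{1}_{I_j(t^\star)}$, whence $\int_a^b s\,g\,\mathrm{d}\nu=0$ for all $g\in Y$ by linearity. To bring $s$ to the form \eqref{sign_avec_abs}, delete the indices $j$ with $t_j^\star=0$ (their intervals carry zero $\nu$-mass and affect no integral) and merge any adjacent surviving intervals with equal sign; what remains takes the values $\pm1$ on at most $n+1$ consecutive intervals with alternating signs, which is \eqref{sign_avec_abs} up to a global factor $\pm1$ that is irrelevant since \eqref{eq_HR} is homogeneous in $s$, with breakpoints $a=\alpha_0<\alpha_1<\dots<\alpha_r<\alpha_{r+1}=b$ and $r\le n$.
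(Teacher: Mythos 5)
Your proof is correct: it is precisely the Borsuk-antipodality argument of Pinkus that the paper's remark defers to (the paper states the Hobby--Rice theorem without proof and cites \cite{Pinkus1976b} for exactly this argument). The delicate points --- well-definedness and continuity of the reparametrised primitives $\widetilde{G}_i$, the vanishing of each summand where its sign factor jumps, and the final deletion/merging step giving $r\le n$ --- are all handled properly.
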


\begin{remark}
A simple proof of this theorem can be found in \cite{Pinkus1976b} using the Borsuk antipodality theorem (See for example \cite{Nirenberg1974}, page 21).
\end{remark}

One cannot guarantee in general that a sign function with $r\le n$ changes of sign as stated in the Hobby-Rice theorem enables us to define a function $g^*$ that coincides with the function to approximate $f$ at these abscissae and no others. In Figure \ref{trop_dintersection}, we can observe a best $L_1$ approximation of a continuous function, $x\mapsto \sin(3x)$, in a 4-dimensional space - the space of cubic functions on the interval $[-1,1]$ - having five intersections with the above function. The following corollary partially explains this observation.

 \begin{figure}[!h]
   \centering
   \includegraphics[width=\linewidth]{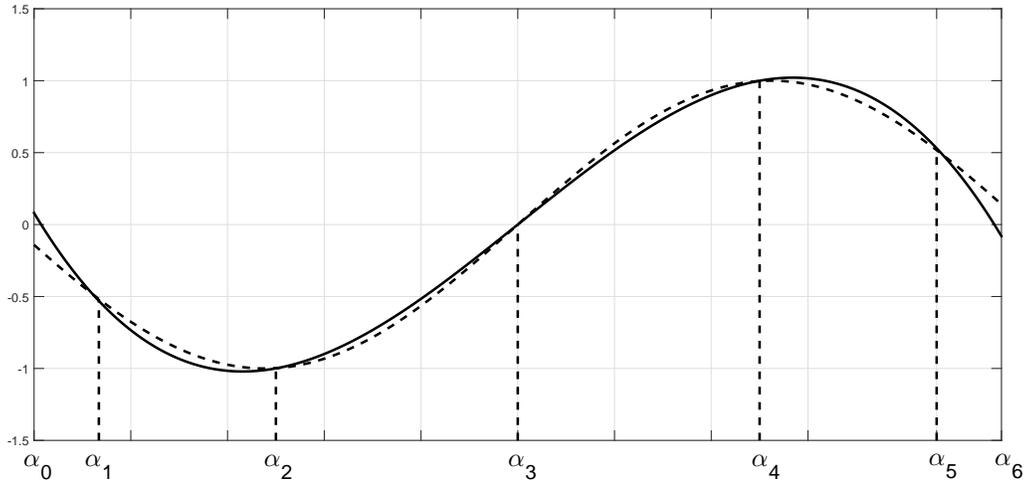}
   \caption{Best $L_1$ cubic approximation (solid line) of $x \mapsto \sin(3x)$ (dashed line) on $[-1,1]$.}
   \label{trop_dintersection}
 \end{figure}

\begin{corollary}
  Let $Y$ be an $n$-dimensional subspace of $L_1([a,b],\nu)$ where $\nu$ is a finite, non-atomic measure and $m$ be an integer such that $m\ge n$. Then there exist a sign-changes function :
\begin{equation}s(x)=\sum_{i=1}^{r_m+1}(-1)^i\mathbf{1}_{[\alpha_{i-1},\alpha_{i}[}(x),
\end{equation}
such that for all $g\in Y$ :
\begin{equation}
  \int_{a}^{b} s(x)g(x)\ \mathrm{d}\nu(x) =0,
\end{equation}
 where $r_m\le m$ :
 \[ a=\alpha_0<\alpha_1<\alpha_2<\dots<\alpha_{r_m}<\alpha_{r_m+1}=b.\]
\label{Cor_Hr}
\end{corollary}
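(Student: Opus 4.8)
The plan is to obtain the statement as an immediate consequence of the Hobby--Rice theorem (Theorem~\ref{th_hr}) after embedding $Y$ into a larger finite-dimensional space. Since $\nu$ is a finite non-atomic measure, $L_1([a,b],\nu)$ is infinite-dimensional, so one can choose functions $\psi_1,\dots,\psi_{m-n}\in L_1([a,b],\nu)$ such that
\[
\widetilde{Y}:=Y\oplus\mathrm{span}\{\psi_1,\dots,\psi_{m-n}\}
\]
is an $m$-dimensional subspace of $L_1([a,b],\nu)$ containing $Y$; when $m=n$ one simply takes $\widetilde{Y}=Y$.

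Next I would apply Theorem~\ref{th_hr} to $\widetilde{Y}$. This yields an integer $r_m\le m$, abscissae $a=\alpha_0<\alpha_1<\dots<\alpha_{r_m}<\alpha_{r_m+1}=b$, and the associated sign-changes function $s(x)=\sum_{i=1}^{r_m+1}(-1)^i\mathbf{1}_{[\alpha_{i-1},\alpha_i[}(x)$ satisfying $\int_a^b s(x)h(x)\,\mathrm{d}\nu(x)=0$ for every $h\in\widetilde{Y}$. Restricting this orthogonality relation to the subspace $Y\subseteq\widetilde{Y}$ gives $\int_a^b s(x)g(x)\,\mathrm{d}\nu(x)=0$ for all $g\in Y$, which is exactly the claimed assertion, with the partition now allowed to split into as many as $m+1$ blocks.

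There is essentially no hard step here: the argument is a one-line reduction once the enlargement $\widetilde{Y}$ is in place. The only points deserving a word of care are that the existence of $\widetilde{Y}$ relies precisely on the non-atomicity of $\nu$ (which forces $L_1([a,b],\nu)$ to be infinite-dimensional), and that Theorem~\ref{th_hr} is invoked with no hypothesis relating $\widetilde{Y}$ to the function $f$ being approximated, so nothing of the form $f\notin\overline{\widetilde{Y}}$ is needed. One can moreover choose the $\psi_i$ so that the value $r_m$ returned by Theorem~\ref{th_hr} is genuinely as large as $m$ (for instance by arranging $\widetilde{Y}$ to be an $m$-dimensional Chebyshev space, whose $L_1$-orthogonal sign pattern changes sign exactly $m$ times), which is what makes the corollary meaningful: it exhibits room for more sign changes than the bound $r\le n$ of Theorem~\ref{th_hr}, and thereby ``partially'' accounts for the five intersections observed in Figure~\ref{trop_dintersection} for a four-dimensional approximation space, even though it does not by itself determine how many a given best approximant must have.
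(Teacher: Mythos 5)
Your argument is exactly the paper's: the paper's proof reads ``Complete $Y$ in a $m$-dimensional subspace of $L_1([a,b],\nu)$. Then apply the Hobby-Rice theorem,'' which is precisely your enlargement to $\widetilde{Y}$ followed by restriction of the orthogonality relation to $Y$. Your additional remarks (on non-atomicity guaranteeing the enlargement exists, and on arranging $r_m=m$) are correct elaborations but not part of the paper's one-line proof.
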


\begin{proof}
  Complete $Y$ in a $m$-dimensional subspace of $L_1([a,b],\nu)$. Then apply the Hobby-Rice theorem.
\end{proof}

The Hobby-Rice theorem is very useful to characterize best $L_1$ approximations of continuous functions in Chebyshev and weak-Chebyshev spaces. In particular in an $n$-dimensional Chebyschev space, it can be shown that there exists a unique set of $n$ real values, sometimes called canonical points, which satisfies \eqref{eq_HR}. This leads to the following classical result (see for example \cite{KreinNudelman1977}, page 339-340 or \cite{Nurnberger2013}, page 73) that proves that for a class of continuous functions, best $L_1$ approximation can be obtained by Lagrange interpolation.

\begin{theorem}
 Let $Y$ be an $n$-dimensional Chebyshev space on $[a,b]$. Then any continuous function on $[a,b]$ has a unique best $L_1$ approximation in $Y$.\\
 Moreover, if $span(Y,f)$ is a $(n+1)$-dimensional Chebyshev space on $[a,b]$ then $g^*$, the best $L_1$ approximation of $f$ in $Y$, is fully determined by the Lagrange interpolation conditions :
  \[g^*(\alpha_i)=f(\alpha_i),\quad i=1,\dots,n,\]
  where the $\alpha_i$ values are those given in the Hobby-Rice theorem.
  \label{th_unicite_cheby}
\end{theorem}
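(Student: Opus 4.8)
The plan is to derive the statement from the characterization Theorem~\ref{th_caract}, from Theorem~\ref{th_hr}, and from the fact recalled just above that an $n$-dimensional Chebyshev space possesses a \emph{unique} canonical set $a<\alpha_1<\dots<\alpha_n<b$, i.e.\ a unique (up to sign) sign-changes function $s^*$ with exactly $n$ sign changes, located at the $\alpha_i$, satisfying \eqref{eq_HR}. I will use throughout that $\nu$ is finite and non-atomic, so that every finite subset of $[a,b]$ is $\nu$-null.

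Existence is immediate because $Y$ is finite dimensional. For uniqueness, I would take $f\in C[a,b]$; the case $f\in Y$ being trivial, I may assume $f\notin Y=\overline Y$. Let $g^*$ be any best $L_1$ approximation of $f$. By the Phelps-type refinement of Theorem~\ref{th_caract} recalled in Section~1 (see also Theorem~2.3 in \cite{Pinkus1989}), there is a sign-changes function $s$ with at most $n$ sign changes, orthogonal to $Y$ in the sense of \eqref{eq_HR}, which agrees $\nu$-almost everywhere with $\mathrm{sign}(f-g^*)$ off $Z(f-g^*)$. I would first argue that $s$ has \emph{exactly} $n$ sign changes: if it had $r\le n-1$ of them, at $\xi_1<\dots<\xi_r$, a standard property of Chebyshev spaces gives $g\in Y\setminus\{0\}$ vanishing and changing sign precisely at $\xi_1,\dots,\xi_r$ (cf.\ \cite{Karlin1966}), so that, after replacing $g$ by $-g$ if needed, $\mathrm{sign}(g)$ equals $s$ off a finite set and $\int_a^b s(x)g(x)\,\mathrm{d}\nu(x)=\int_a^b|g(x)|\,\mathrm{d}\nu(x)>0$, contradicting \eqref{eq_HR}. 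Hence $s$ has exactly $n$ sign changes and, by uniqueness of the canonical set, $s=\pm s^*$. Therefore $\mathrm{sign}(f-g^*)=\pm s^*$ $\nu$-almost everywhere off $Z(f-g^*)$, so on each interval $(\alpha_{i-1},\alpha_i)$ the value $s^*$ is a constant $\pm1$ and hence, $f-g^*$ being continuous, $f-g^*$ keeps a fixed sign on the closed interval $[\alpha_{i-1},\alpha_i]$, these signs alternating with $i$; comparing the two intervals meeting at $\alpha_i$ forces $(f-g^*)(\alpha_i)=0$ for $i=1,\dots,n$. Thus every best approximation $g^*$ satisfies the Lagrange conditions $g^*(\alpha_i)=f(\alpha_i)$, $i=1,\dots,n$; as $Y$ is an $n$-dimensional Chebyshev space this interpolation problem has a unique solution, which proves both the uniqueness of the best $L_1$ approximation and the displayed formula.

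To finish, assuming in addition that $\mathrm{span}(Y,f)$ is an $(n+1)$-dimensional Chebyshev space, I would show conversely that the (unique) interpolant $g^*\in Y$ of $f$ at $\alpha_1,\dots,\alpha_n$ is a best $L_1$ approximation. The hypothesis forces $f\notin Y$, hence $f-g^*\not\equiv 0$; being a nonzero element of the $(n+1)$-dimensional Chebyshev space $\mathrm{span}(Y,f)$, it has at most $n$ zeros, so — vanishing already at the $n$ distinct points $\alpha_i$ — it vanishes exactly there and changes sign at each $\alpha_i$ (a nonzero element of an $(n+1)$-dimensional Chebyshev space possessing a zero without sign change would have too many zeros counted with multiplicity). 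Consequently $\mathrm{sign}(f-g^*)=\pm s^*$ off the finite, hence $\nu$-null, set $Z(f-g^*)$, and therefore $\int_a^b\mathrm{sign}(f-g^*)(x)\,g(x)\,\mathrm{d}\nu(x)=\pm\int_a^b s^*(x)g(x)\,\mathrm{d}\nu(x)=0$ for every $g\in Y$. Since $\nu(Z(f-g^*))=0$, the equality form \eqref{eg_caract} of Theorem~\ref{th_caract} shows that $g^*$ is a best $L_1$ approximation, and by the uniqueness already proved it is \emph{the} one.

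The main obstacle, I expect, will be the middle step: turning ``$g^*$ is optimal'' into ``$f-g^*$ vanishes at the canonical points''. This rests on having the Phelps-type refinement (which supplies a sign-changes function with \emph{at most} $n$ changes), on the sign-counting properties of Chebyshev spaces (to upgrade this to \emph{exactly} $n$), and on the uniqueness of the canonical set; the closing continuity argument must also be run carefully, since $f-g^*$ may vanish on whole subintervals and not only at the $\alpha_i$. Everything else is routine bookkeeping with Theorem~\ref{th_caract}.
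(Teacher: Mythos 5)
Your proof of the ``Moreover'' part is sound and is essentially the argument the paper itself uses at the corresponding step of its Theorem~\ref{th_cheby_saut}: under the hypothesis on $\mathrm{span}(Y,f)$, the interpolant $g^*$ at the canonical points makes $f-g^*$ vanish exactly at the $\alpha_i$ with a sign change at each, so $\mathrm{sign}(f-g^*)=\pm s^*$ off a finite ($\nu$-null) set and the equality form \eqref{eg_caract} of Theorem~\ref{th_caract} applies. (Note that the paper gives no proof of Theorem~\ref{th_unicite_cheby} at all --- it quotes it as classical from \cite{KreinNudelman1977} and \cite{Nurnberger2013} --- so your attempt has to stand on its own.)

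The uniqueness argument, however, has a genuine gap at its very first step. You assert that the sign-changes function $s$ supplied by the Phelps-type characterization --- the one agreeing $\nu$-a.e.\ with $\mathrm{sign}(f-g^*)$ off $Z(f-g^*)$ --- has \emph{at most} $n$ sign changes. Neither Phelps' lemma nor Theorem~2.3 of \cite{Pinkus1989} provides such a bound: the Hobby--Rice function with $r\le n$ changes and the function $\mathrm{sign}(f-g^*)$ are in general two \emph{different} solutions of \eqref{scf}, and orthogonality to an $n$-dimensional space does not cap the number of breakpoints. The paper's own Figure~\ref{trop_dintersection} is a counterexample: for the best cubic $L_1$ approximation of $\sin(3x)$ on $[-1,1]$ ($n=4$), the error has five sign changes, so $\mathrm{sign}(f-g^*)$ is \emph{not} $\pm s^*$ and $f-g^*$ does not vanish at the four canonical points only. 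Your argument would conclude that \emph{every} best approximation interpolates $f$ precisely at the canonical points, i.e.\ it proves the second assertion of the theorem without its hypothesis --- a sure sign something is wrong. Only the lower bound in your reasoning is correct: a sign function orthogonal to $Y$ must have at least $n$ sign changes, by the sign-matching element of the Chebyshev space. The standard repair (Jackson's uniqueness theorem; cf.\ Theorem~4.4 of \cite{Rice1964Book}, which the paper adapts for Theorem~\ref{th_cheby_saut}) runs instead through convexity: if $g_1\neq g_2$ are both best, so is $g=\tfrac12(g_1+g_2)$, and equality in $|f-g|\le\tfrac12|f-g_1|+\tfrac12|f-g_2|$ forces $(f-g_1)(f-g_2)\ge 0$; then either $\nu\bigl(Z(f-g)\bigr)>0$, in which case $g_1=g_2=f$ on an infinite set, or $f-g$ has at least $n$ separated sign changes, at each of which $(f-g_1)=-(f-g_2)$ combined with $(f-g_1)(f-g_2)\ge0$ forces $g_1=g_2=f$; in both cases $g_1-g_2\in Y$ has at least $n$ zeros, hence vanishes identically. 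No identification of the sign changes with the canonical points is needed, nor is it available.
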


There is an analogous result for weak-Chebyshev spaces proved by Michelli \cite{Micchelli1977}. Our goal is to prove similar results for the problem of best $L_1$ approximation of Heaviside-type functions. This case is different because a change of sign in the difference function between the function to approximation $f$ and a best $L_1$ approximation $g^*$ must occur at the discontinuity $\xi_0$ (see Figure \ref{fig_P3_0}). However the Hobby-Rice theorem cannot guarantee that one of the $\alpha_i$ is equal to $\xi_0$. So we decide to look for a solution of \eqref{eq_HR} with one of the $\alpha_i$ values fixed at $\xi_0$. This will be the object of the next section.

\begin{figure}[!h]
\centering
\includegraphics[width=\linewidth]{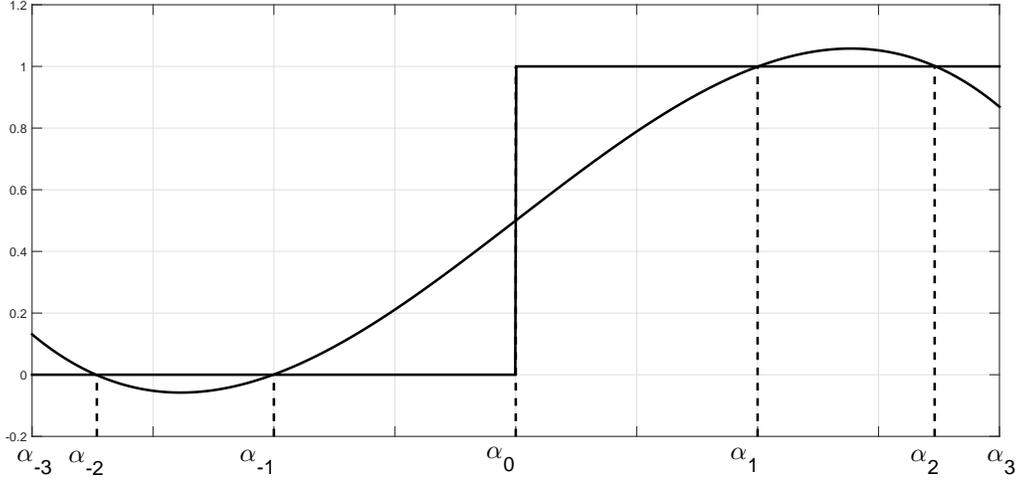}
\caption{Best $L_1$ cubic polynomial approximation of the Heaviside function.}
\label{fig_P3_0}
\end{figure}

\section{Extented Hobby-Rice theorem}

Without loss of generality, we give this result on $L_1([-1,1],\nu)$ and with $\xi_0=0$. Indeed, we can show that $\mathcal{J}_{\xi_0}[a,b]$ and $\mathcal{J}_0[-1,1]$ are homeomorphic by mean of an homography. This enables us to establish a symmetric version of the Hobby-Rice theorem by eliminating the even functions. This new version will be essential to characterize solutions of best $L_1$ approximation of Heaviside-type functions in Chebyshev and weak-Chebyshev spaces.\\

\begin{theorem}
  \label{th_hr_extended}
 Let $Y$ be an $n$-dimensional subspace of $L_1([-1,1],\nu)$ where $\nu$ is a finite, non-atomic measure. Let $Z$ be the subspace of $Y$ composed of the even functions and $q=n-\mathrm{dim }\ Z$. Then for any fixed $m\ge q$, there exists a sign-changes function :
 \begin{equation}
s(x)=\sum_{i=-r_{m}}^{r_{m}+1}(-1)^i\mathbf{1}_{[\alpha_{i-1},\alpha_{i}[}(x).
 \end{equation}
 such that for any $g\in Y$ :
\begin{equation}
\label{eq_HR_sym}
\int_{-1}^1 s(x)g(x)\ \mathrm{d}\nu(x)=0,
\end{equation}
 where :
  \[-1=\alpha_{-r_{m}-1}<\alpha_{-r_{m}}<\dots<\alpha_{-1}<\alpha_0=0<\alpha_1<\dots<\alpha_{r_{m}}<\alpha_{r_{m}+1}=1,\]
with $r_m\le m$, $\alpha_{-i}=-\alpha_i$, $i=1,\dots,r_{m}.$\\
\end{theorem}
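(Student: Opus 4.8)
The plan is to reduce the statement to the classical Hobby--Rice result, more precisely to Corollary~\ref{Cor_Hr}, applied on the half-interval $[0,1]$. The first observation is that the prescribed shape of $s$ is completely rigid: since the cut points satisfy $\alpha_{-i}=-\alpha_i$ and the value on $[\alpha_{i-1},\alpha_i)$ is $(-1)^i$, the function $s$ is automatically odd, it equals $+1$ on $(\alpha_{-1},0)$ and $-1$ on $(0,\alpha_1)$ --- hence it does change sign at $0$ --- and it is entirely determined by its restriction $\sigma:=s|_{[0,1]}$, which is an ordinary sign-changes function on $[0,1]$ starting with the value $-1$ and changing sign at $0<\alpha_1<\dots<\alpha_{r_m}<1$. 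So it suffices to produce such a $\sigma$ whose odd extension $s$ is $\nu$-orthogonal to $Y$, while keeping control of the number $r_m$ of cut points in $(0,1)$.

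Next I would convert the orthogonality requirement on $[-1,1]$ into one on $[0,1]$. Splitting $\int_{-1}^{1}sg\,\mathrm{d}\nu$ over $[-1,0)$ and $[0,1]$ and substituting $x\mapsto-x$ in the first piece yields
\[
 \int_{-1}^{1} s(x)\,g(x)\,\mathrm{d}\nu(x)=\int_{0}^{1}\sigma(y)\,G_g(y)\,\mathrm{d}\mu(y),
\]
where $\mu$ is a finite non-atomic measure on $[0,1]$ dominating both $\nu|_{[0,1]}$ and the push-forward of $\nu|_{[-1,0)}$ under $x\mapsto -x$, and $G_g$ is the corresponding linear combination of $g(y)$ and $g(-y)$ (the coefficients being the relevant Radon--Nikodym densities); when $\nu$ is symmetric about the origin this is simply $G_g=g_o$, the odd part of $g$. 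What makes the dimension count work is that in this symmetric situation the linear map $g\mapsto G_g$ has kernel exactly $Z$, so that $\widehat Y:=\{\,G_g:g\in Y\,\}$ is a subspace of $L_1([0,1],\mu)$ of dimension $n-\dim Z=q$: the even functions of $Y$ drop out of \eqref{eq_HR_sym} once $s$ is forced to be odd. Since $m\ge q\ge\dim\widehat Y$, Corollary~\ref{Cor_Hr} applied to $\widehat Y$ on $[0,1]$ produces a sign-changes function $\sigma(y)=\sum_{i=1}^{r_m+1}(-1)^i\mathbf 1_{[\beta_{i-1},\beta_i)}(y)$, with $0=\beta_0<\beta_1<\dots<\beta_{r_m}<\beta_{r_m+1}=1$ and $r_m\le m$, for which $\int_0^1\sigma\,G_g\,\mathrm{d}\mu=0$ for every $g\in Y$; note that $\sigma\equiv-1$ on $[0,\beta_1)$, exactly as required.

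It then remains to reassemble. Put $\alpha_i:=\beta_i$ for $0\le i\le r_m+1$, $\alpha_{-i}:=-\beta_i$ for $1\le i\le r_m+1$, and let $s$ be given by the displayed formula of the theorem. By construction $s$ has the stated symmetric form, $s|_{[0,1]}=\sigma$ and $s$ is its own odd extension; the identity of the previous step then gives $\int_{-1}^{1}sg\,\mathrm{d}\nu=0$ for all $g\in Y$, and the sign changes of $s$ occur only among $0$ and $\pm\beta_1,\dots,\pm\beta_{r_m}$, so indeed $r_m\le m$.

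The real obstacle is the reduction carried out in the second step: the claim that only $q$ --- rather than $n$ --- scalar conditions have to be imposed. One must argue that the even part of $Y$ contributes nothing to $\int_{-1}^{1}sg\,\mathrm{d}\nu$, which is exactly where the symmetry of the construction about $0$ is used, and which yields the count $q=n-\dim Z$. A more self-contained route mimics Pinkus' proof of Hobby--Rice \cite{Pinkus1976b}: parametrize the symmetric partitions of $[-1,1]$ by the sphere $S^{m}$, form the odd continuous test map $t\mapsto\bigl(\int_{-1}^{1}s_t g_j\,\mathrm{d}\nu\bigr)_{j=1}^{q}$ against a basis $g_1,\dots,g_q$ of a fixed complement of $Z$ in $Y$, and invoke the Borsuk antipodality theorem; a zero of this map provides the required $s$, once one checks, again through the odd/even splitting, that the remaining $\dim Z$ conditions hold automatically.
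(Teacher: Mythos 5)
Your proof follows essentially the same route as the paper's: reduce to the half-interval $[0,1]$ by observing that an odd sign-changes function automatically annihilates the even subspace $Z$, apply Corollary \ref{Cor_Hr} to the (at most $q$-dimensional) space of odd parts $\{x\mapsto g(x)-g(-x) : g\in Y\}$, and reassemble by symmetry. The only substantive difference is that you make explicit the hypothesis that $\nu$ is symmetric about the origin, which is genuinely needed both for the change of variables and for the even functions to drop out, and which the paper's proof uses silently.
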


\begin{proof}
 The idea of the proof is to find the positive part of the real sequence  $\{\alpha_i$, $i=1,\dots,r_m\}$ by applying the Corollary \ref{eq_HR_sym} to a well-chosen space $\widehat Y\subset L_1([0,1], \nu)$ of dimension  $\widehat q\leq q$. Then, we deduce \eqref{eq_HR_sym} by symmetry. \\
Firstly, let us not that for any odd sign-changes function $s$ and for any $g\in Y$, we have :
\[
\int_{-1}^1 s(x)g(x) d\nu(x)=\int_{0}^1s(x)(g(x)-g(-x)) d\nu(x).
\]
This proves that the equality \eqref{eq_HR_sym} is automatically satisfied for any $g\in Z$. Define the space $W$ of dimension $q$ such that $Z \oplus W= Y$. The problem is to find a sign-changes function $s$ such that :
\[
\int_{0}^1 s(x)(g(x)-g(-x)) d\nu(x)=0  \hbox{ for all }g\in W.
\]
This property is obtained by applying the Corollary \ref{Cor_Hr} to the space $\widehat Y$ (of dimension at most $q$) defined as follows :
\[
\widehat Y=\{\widehat g : x \in [0,1]\mapsto g(x)-g(-x),\ g\in W\}.\]
Finally, we obtain an odd sign-changes function satisfying \eqref{eq_HR_sym}.
\end{proof}

\section{Best $L_1$ approximation of Heaviside-type functions in a Chebyshev space}

In this section, we firstly give a lemma about the dimension of the subspace of an $n$-dimensional Chebyshev space on $[-1,1]$ composed of the even functions. This lemma will be very helpful in giving a version of the extended Hobby-Rice theorem in a Chebyshev space.

\begin{lemma}
  Let $Y$  be an $n$-dimensional Chebyshev space on $[-1,1]$. The dimension of the subspace of $Y$ composed of the even functions is at most $\lceil n/2 \rceil$ where $\lceil \cdot \rceil$ is the ceiling function.
\end{lemma}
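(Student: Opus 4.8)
The plan is to pass to the half-interval $(0,1]$ and reduce the statement to a zero-counting argument. Write $Z$ for the subspace of even functions of $Y$ and put $k=\dim Z$. First I would note that the restriction map $g\mapsto g|_{(0,1]}$ is injective on $Z$: an even function that vanishes on all of $(0,1]$ vanishes at $0$ by continuity and on $[-1,0)$ by evenness, hence is identically $0$ (alternatively, it has more than $n-1$ zeros, so the Chebyshev property of $Y$ forces it to be $0$). Consequently $\widehat Z:=\{\,g|_{(0,1]}:g\in Z\,\}$ is a $k$-dimensional subspace of $C^0((0,1])$, and it suffices to bound $\dim\widehat Z$.

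Next I would bound the number of zeros a nonzero element of $\widehat Z$ can have. Let $g\in Z$ be nonzero and let $0<x_1<\dots<x_p\le 1$ be its distinct zeros in $(0,1]$. Since $g$ is even, $-x_1,\dots,-x_p$ are $p$ further distinct zeros of $g$ in $[-1,0)$, so $g$ has at least $2p$ distinct zeros on $[-1,1]$; because $Y$ is an $n$-dimensional Chebyshev space this gives $2p\le n-1$, i.e. $p\le\lfloor (n-1)/2\rfloor$. Thus every nonzero function in $\widehat Z$ has at most $\lfloor (n-1)/2\rfloor$ zeros in $(0,1]$.

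Finally I would convert this zero bound into the desired dimension bound by the standard interpolation argument. If $k-1>\lfloor (n-1)/2\rfloor$, choose $k-1$ distinct points of $(0,1]$; the evaluation map $\widehat Z\to\mathbf{R}^{k-1}$ has a nontrivial kernel since $\dim\widehat Z=k>k-1$, which produces a nonzero element of $\widehat Z$ with at least $k-1$ zeros in $(0,1]$, contradicting the previous paragraph. Hence $k\le\lfloor (n-1)/2\rfloor+1$, and a one-line check on the parity of $n$ shows $\lfloor (n-1)/2\rfloor+1=\lceil n/2\rceil$, which is exactly the claimed bound.

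I do not expect a genuine obstacle here; the only points that need care are (i) phrasing the count consistently in terms of \emph{distinct} zeros, as in the paper's definition of a Chebyshev space, and (ii) working on the half-open interval $(0,1]$ rather than $[0,1]$ so that a possible zero of $g$ at the origin only strengthens the estimate (it would give the sharper $2p+1\le n-1$) and never interferes with the argument. The bound is sharp, as one sees by taking $Y$ to be the polynomials of degree at most $n-1$ on $[-1,1]$, whose even subspace is spanned by $1,x^2,x^4,\dots$ and has dimension exactly $\lceil n/2\rceil$.
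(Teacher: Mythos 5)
Your proof is correct and follows essentially the same route as the paper's: assume the even subspace is too large, use an interpolation/evaluation-map argument to produce a nonzero even function vanishing at many points of the positive half-interval, double the zero count by evenness, and contradict the bound of $n-1$ distinct zeros in an $n$-dimensional Chebyshev space. Your write-up is merely more careful about the injectivity of the restriction map, the distinctness of zeros, and the endpoint bookkeeping, all of which the paper leaves implicit.
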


\begin{proof}
Assume to the contrary that $Z$, the subspace of $Y$ composed of the even functions, has dimension $\lceil n/2 \rceil+1$. One can construct an even function with $\lceil n/2 \rceil$ zeros on the interval $(0,1)$. Since this function is even, it has  $2\lceil n/2 \rceil$ zeros on the interval $[-1,1]$. This is impossible because $Y$ is an $n$-dimensional Chebyshev space on $[-1,1]$.

\end{proof}

\noindent Now we can give a version of the extended Hobby-Rice theorem in a Chebyshev space.

\begin{proposition}
  Let $Y$ be an $n$-dimensional Chebyshev space on $[-1,1]$ and $Z$ be the subspace of $Y$ composed of the even functions. Assume $\mathrm{dim} \ Z = \lceil n/2 \rceil$. Then there exists a unique sign-changes function :
 \begin{equation}
s(x)=\sum_{i=-\lfloor n/2 \rfloor}^{\lfloor n/2 \rfloor+1}(-1)^i\mathbf{1}_{[\alpha_{i-1},\alpha_{i}[}(x).
 \end{equation}
   such that for any $g\in Y$ :
\begin{equation}
  \int_{-1}^1 s(x)g(x)\ \mathrm{d}\nu(x) =0, \quad j=1,2,\dots,n,
  \label{eq_HR_cheby2}
\end{equation}
where :
\begin{equation*}-1=\alpha_{-\lfloor n/2 \rfloor-1}<\alpha_{-\lfloor n/2 \rfloor}<\dots<\alpha_{-1}<\alpha_0=0<\alpha_1<\dots<\alpha_{\lfloor n/2 \rfloor+1}=1,
\end{equation*}
with $\alpha_{-i}=-\alpha_i$, $i=1, \dots,\lfloor n/2 \rfloor$ .\\
\label{prop_hr_cheby2}
\end{proposition}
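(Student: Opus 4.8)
The plan is to transfer the problem, via the even/odd splitting used to prove Theorem~\ref{th_hr_extended}, to the classical $L_1$ theory of Chebyshev systems on the half-interval $(0,1]$; the only new ingredient needed is that the space produced by the splitting is again Chebyshev, and this is precisely where the hypothesis $\mathrm{dim}\,Z=\lceil n/2\rceil$ is used. First, $\mathrm{dim}\,Z=\lceil n/2\rceil$ forces $q=n-\mathrm{dim}\,Z=\lfloor n/2\rfloor$, so Theorem~\ref{th_hr_extended} with $m=q$ already yields an odd sign-changes function $s$ with positive nodes $0<\alpha_1<\dots<\alpha_{r_m}<1$, $r_m\le\lfloor n/2\rfloor$, satisfying \eqref{eq_HR_cheby2}; what remains is to prove $r_m=\lfloor n/2\rfloor$ and that the nodes are unique, for then $s$ (odd, with $\alpha_0=0$) is determined. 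Since for odd $s$ and any $g\in Y$ one has $\int_{-1}^1 sg\,\mathrm d\nu=\int_0^1 s(x)(g(x)-g(-x))\,\mathrm d\nu(x)$, condition \eqref{eq_HR_cheby2} is equivalent to: $s|_{(0,1]}$ annihilates the space $\widehat Y=\{(g-g(-\cdot))|_{(0,1]}:g\in Y\}$, whose dimension is $n-\mathrm{dim}\,Z=\lfloor n/2\rfloor$ because the kernel of $g\mapsto (g-g(-\cdot))|_{(0,1]}$ is exactly $Z$.

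The crucial claim is that $\widehat Y$ is a Chebyshev space on $(0,1]$. Suppose it is not: some $\widehat g\in\widehat Y\setminus\{0\}$ has zeros $0<z_1<\dots<z_{\lfloor n/2\rfloor}\le 1$, and we fix $g\in Y$ with $\widehat g=(g-g(-\cdot))|_{(0,1]}$. Then $g$ is not even, so $\mathcal V:=Z\oplus\mathbf{R}g$ is a subspace of $Y$ of dimension $\lceil n/2\rceil+1$; moreover, writing $h=\lambda g+e\in\mathcal V$ with $e\in Z$ even and using $g(z_i)-g(-z_i)=\widehat g(z_i)=0$, every $h\in\mathcal V$ satisfies $h(z_i)=h(-z_i)$ for $i=1,\dots,\lfloor n/2\rfloor$. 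Choose a nonzero $h\in\mathcal V$ vanishing at $z_1,\dots,z_{\lfloor n/2\rfloor}$ and at $\lceil n/2\rceil-\lfloor n/2\rfloor$ further points of $(0,1)$ --- this is $\lceil n/2\rceil=\mathrm{dim}\,\mathcal V-1$ linear conditions, so a nonzero solution exists. Such an $h$ vanishes also at $-z_1,\dots,-z_{\lfloor n/2\rfloor}$, hence has at least $\lfloor n/2\rfloor+\lceil n/2\rceil=n$ distinct zeros in $[-1,1]$, contradicting $h\in Y\setminus\{0\}$ with $\mathrm{dim}\,Y=n$. (When $n=1$, $\widehat Y=\{0\}$ and the statement is trivial.)

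Granting the claim, $s|_{(0,1]}$ is a sign-changes function with at most $\lfloor n/2\rfloor$ sign changes annihilating the $\lfloor n/2\rfloor$-dimensional Chebyshev space $\widehat Y$ on $(0,1]$, and the classical arguments for $L_1$-extremal sign functions in Chebyshev systems (behind Theorem~\ref{th_unicite_cheby}) run on that interval. For the exact count: if $r_m<\lfloor n/2\rfloor$, pick $\widehat g\in\widehat Y\setminus\{0\}$ changing sign exactly at $\alpha_1,\dots,\alpha_{r_m}$; then $s\widehat g$ is one-signed and not $\nu$-a.e.\ zero, so $\int_0^1 s\widehat g\,\mathrm d\nu\neq 0$, a contradiction, whence $r_m=\lfloor n/2\rfloor$. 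For uniqueness: given two admissible $s,s'$ with positive node sets $A\neq B$ (both of cardinality $\lfloor n/2\rfloor$), annihilation of $\widehat Y$ gives $\int_0^1(s-s')\widehat g\,\mathrm d\nu=0$; writing $s-s'=2s\,\mathbf 1_E$ with $E=\{x\in(0,1]:s(x)s'(x)=-1\}$, this becomes $\int_E s\widehat g\,\mathrm d\nu=0$ for all $\widehat g\in\widehat Y$. Because the sign convention makes $s$ and $s'$ both equal $-1$ just to the right of $0$ and share a common sign just to the left of $1$, $E$ is a disjoint union of $|A\setminus B|\ge 1$ open subintervals of $(0,1)$ on whose interiors $s$ changes sign only at points of $A\cap B$; hence $\widehat Y$, being Chebyshev, contains a nonzero $\widehat g$ with at most $\lfloor n/2\rfloor-1$ sign changes for which $s\widehat g$ is one-signed and not $\nu$-a.e.\ zero on $E$ --- impossible. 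Therefore $A=B$ and $s$ is unique.

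The step I expect to be the genuine obstacle is the Chebyshev-ness of $\widehat Y$ on $(0,1]$. The zero count in that argument is tight --- it yields exactly $n$ zeros --- so it really uses both terms of $n=\lfloor n/2\rfloor+\lceil n/2\rceil$ together with the maximality $\mathrm{dim}\,Z=\lceil n/2\rceil$; without that hypothesis the conclusion of the Proposition fails. One should also note that $\widehat Y$ is Chebyshev only on the half-open interval $(0,1]$ (every $\widehat g$ vanishes at $0$), so the canonical-points machinery must be applied there rather than quoted verbatim for a closed interval; this causes no difficulty since $\nu$ is non-atomic and the nodes lie in the open interval.
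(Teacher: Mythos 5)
Your proof is correct, but it takes a genuinely different route from the paper's. The paper works directly on $[-1,1]$ with the full space $Y$: having obtained the symmetric nodes from Theorem~\ref{th_hr_extended} with $m=\lfloor n/2\rfloor$ (this is where $\dim Z=\lceil n/2\rceil$ enters, since it forces $q=\lfloor n/2\rfloor\le m$), it rules out $r_m<\lfloor n/2\rfloor$ by building $g\in Y$ whose sign pattern matches that of $s$ at all $2r_m+1\le n-1$ sign-change points of $[-1,1]$, and it proves uniqueness by transplanting N\"urnberger's argument verbatim: compare $s_\alpha$ and $s_\beta$ from the leftmost index $p$ where they differ and exhibit a determinant function $g$ making $g(s_\alpha-s_\beta)\ge 0$ with strict inequality somewhere. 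You instead push everything down to the half-interval: you prove the extra lemma that $\widehat Y=\{(g-g(-\cdot))|_{(0,1]}:g\in Y\}$ is a $\lfloor n/2\rfloor$-dimensional Chebyshev space on $(0,1]$ (your reflection-and-zero-count argument is correct and tight, and the maximality $\dim Z=\lceil n/2\rceil$ is exactly what makes it close), after which both the exact count and the uniqueness become the classical canonical-points statements for $\widehat Y$ on $(0,1]$. Your approach buys a clean structural explanation of why there are precisely $\lfloor n/2\rfloor$ positive nodes and isolates a reusable lemma; the paper's approach avoids discussing Chebyshev systems on a half-open interval and reuses the textbook uniqueness proof with minimal modification. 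Two small points on your write-up: the uniqueness step compresses the sign-change count (the needed bound is $|A\cap B|+|A\setminus B|-1\le\lfloor n/2\rfloor-1$, which is worth stating), and both your argument and the paper's implicitly assume $\nu$ gives positive mass to the sets where the one-signed products are nonzero; neither issue is a gap specific to your proof.
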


\begin{proof}
We apply the Theorem \ref{th_hr_extended}, page \pageref{th_hr_extended} for $m=\lfloor n/2 \rfloor$. We obtain a real sequence $\{\alpha_i,\ i=-r_m,\dots,r_m\}$ with $\alpha_0=0$ and $\alpha_{-k}=-\alpha_k$, $k=1,\dots,r_m$ such that \eqref{eq_HR_sym} is satisfied. Assume $r_m<\lfloor n/2 \rfloor$.\\
Since $Y$ is a Chebyshev space, there exists $g\in Y$ that vanishes and changes of sign at the $\alpha_i$ on $[-1,1]$.\\
Then the function $x\mapsto s(x)g(x)$ has constant sign on $[-1,1]$ and :
\[\int_{-1}^1 s(x)g(x)d\nu(x)\neq 0.\]
This contradicts \eqref{eq_HR_cheby2}.\\
Assume now that there exists another sequence of real values not all equals to the $\alpha_i$ satisfying \eqref{eq_HR_cheby2} and denoted by $\{\beta_i,\ i=-\lfloor n/2 \rfloor,\dots,\lfloor n/2 \rfloor\}$. We have $\beta_0=\alpha_0=-1$ and $\beta_{\lfloor n/2 \rfloor+1}=\alpha_{\lfloor n/2 \rfloor+1}=1$. Define the two following sign-changes functions:
\[s_\alpha(x)=\sum_{i=-\lfloor n/2 \rfloor}^{\lfloor n/2 \rfloor+1}(-1)^i\mathbf{1}_{[\alpha_{i-1},\alpha_{i}[}(x),\]
\[s_\beta(x)=\sum_{i=-\lfloor n/2 \rfloor}^{\lfloor n/2 \rfloor+1}(-1)^i\mathbf{1}_{[\beta_{i-1},\beta_{i}[}(x).\]
This proof of unicity is derived from the one given in \cite{Nurnberger2013}, page 61, for the classical version of the Hobby-Rice theorem. Set :
\[\alpha_p=\min_{i} \{\alpha_i \ | \ \alpha_i\neq \beta_i\}.\]
We may assume that $\alpha_p<\beta_p$. Otherwise, we exchange the roles of $\alpha_p$ and $\beta_p$. It follows that :
\begin{align*}
  s_\alpha(x)-s_\beta(x) & = 0, \quad x\in [-1,\alpha_p),\\
  (-1)^p( s_\alpha(\alpha_p) -s_\beta(\alpha_p))&>0,\\
  (-1)^i( s_\alpha(x) -s_\beta(x))&\ge 0, \quad x\in [\alpha_i,\alpha_{i+1}],\ i=p,\dots,\lfloor n/2 \rfloor.
\end{align*}
Now, define the function :
\[g(x)=\mathrm{det}(\phi_k(\gamma_l))_{k=1\dots,n}^{l=1\dots,n},\quad x\in[-1,1],\]
where $(\gamma_1,\dots,\gamma_n)=(\alpha_1,\dots,\alpha_{j-1},\alpha_{j+1},\dots,x)$. Since $\{\phi_j\}_j$ is a Chebyshev system, by replacing $g$ by $-g$ if necessary, we have :
\[(-1)^ig(x)\ge 0, \quad x\in [\alpha_i,\alpha_{i+1}],\ i=p,\dots,\lfloor n/2 \rfloor,\]
and $(-1)^pg(\alpha_p)>0.$ We then have :
\[g(x)(s_\alpha(x)-s_\beta(x))\ge 0, \quad x\in [\alpha_i,\alpha_{i+1}],\ i=p,\dots,\lfloor n/2 \rfloor,\]
and $g(\alpha_p)(s_\alpha(\alpha_p)-s_\beta(\alpha_p))> 0.$ This contradicts the fact that both of $\{\alpha_i\}$ and $\{\beta_i\}$ are solutions of \eqref{eq_HR_cheby2}.
\end{proof}

\noindent We are now able to show the following unicity theorem.

\begin{theorem}
  Let $Y$ be an $n$-dimensional Chebyshev space on $[-1,1]$ where $n$ is even. Assume that the subspace of $Y$ composed of the even functions has dimension $n/2$. Then every function $f\in \mathcal{J}_0[-1,1]$ has a unique best $L_1$ approximation in $Y$.\\
  Moreover,
  \begin{equation}
  \text{if every element of } \mathrm{span}(Y,f) \text{ has a most } n \text{ simple zeros, }
  \label{prop_etoile}
  \end{equation}
  then the best $L_1$ approximation $L_1$ $g^*$ of $f$ in $Y$ is fully determined by :
  \[g^*(\alpha_i)=f(\alpha_i),\quad  i=-n/2,-n/2+1,\dots,-1,1,\dots,n/2,\]
  where the $\alpha_i$ are those given in Proposition \ref{prop_hr_cheby2}.\\
  \label{th_cheby_saut}
\end{theorem}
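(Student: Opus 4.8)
The plan is to adapt the proof of Theorem~\ref{th_unicite_cheby}, replacing the ordinary Hobby--Rice canonical points by the symmetric ones furnished by Proposition~\ref{prop_hr_cheby2}. Existence of a best $L_1$ approximation is automatic, $Y$ being finite dimensional. I use throughout that $\nu$ is symmetric under $x\mapsto -x$ (the standing normalisation of Section~2), so that $\int_{-1}^1 h(x)z(x)\,\mathrm{d}\nu(x)=\int_0^1\bigl(h(x)+h(-x)\bigr)z(x)\,\mathrm{d}\nu(x)$ whenever $z$ is even.

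\emph{Step 1 (a structural fact).} For an arbitrary best approximation $g^{*}$, Theorem~\ref{th_caract} together with the convention $\nu(Z(f-g^{*}))=0$ gives that $\mathrm{sign}(f-g^{*})$ coincides $\nu$-a.e.\ with a sign-changes function $\bar s$ orthogonal to $Y$. I claim $\bar s$ has at least $n/2$ sign changes in $(-1,0)$ and at least $n/2$ in $(0,1)$. Since $\bar s\perp Z$, the step function $\chi(x):=\bar s(x)+\bar s(-x)$ is orthogonal on $[0,1]$ to the restriction $Z|_{[0,1]}$, which is an $(n/2)$-dimensional Chebyshev space (a nonzero even element of $Y$ with $k$ zeros in $(0,1]$ has at least $2k$ zeros in $[-1,1]$, so $k\le n/2-1$). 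If $\chi\equiv0$ on $[0,1]$, then $\bar s$ is odd, hence its sign changes are symmetric about $0$ and include $0$; as $\bar s$ is orthogonal to the $n$-dimensional Chebyshev space $Y$ it must have at least $n$ sign changes (otherwise some $g\in Y$ vanishes and changes sign exactly where $\bar s$ does, so that $\bar s g$ has constant sign and $\int\bar s g\,\mathrm{d}\nu\neq0$), hence at least $n/2$ on each side. If $\chi\not\equiv0$ on $[0,1]$, the Chebyshev property of $Z|_{[0,1]}$ forces $\chi$ to have at least $n/2$ sign changes on $(0,1)$, and since each sign change of $\chi$ consumes at least one sign change of $\bar s$ on $(0,1)$ and one on $(-1,0)$ (distinct sign changes of $\chi$ using disjoint ones), the claim follows. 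It is here that $\mathrm{dim}\,Z=n/2$ is essential.

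\emph{Step 2 (uniqueness).} If $g_1,g_2$ are best approximations then so is $(g_1+g_2)/2$, whence $\|f-\tfrac{g_1+g_2}{2}\|_1=\tfrac12\|f-g_1\|_1+\tfrac12\|f-g_2\|_1$, which forces $f-g_1$ and $f-g_2$ to have the same sign $\nu$-a.e., so $\mathrm{sign}(f-g_1)=\mathrm{sign}(f-g_2)=\bar s$ $\nu$-a.e. By Step~1, $\bar s$ changes sign at at least $n$ points of $(-1,1)\setminus\{0\}$; at each of these both $f-g_1$ and $f-g_2$ are continuous and change sign, hence vanish, so $g_1$ and $g_2$ agree with $f$ — and with one another — at $\ge n$ distinct points. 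Since $g_1-g_2\in Y$ and $Y$ is an $n$-dimensional Chebyshev space, $g_1=g_2$.

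\emph{Step 3 (the interpolation formula under \eqref{prop_etoile}).} Let $g^{*}$ be the unique best approximation and $\bar s=\mathrm{sign}(f-g^{*})$. Then $f-g^{*}\in\mathrm{span}(Y,f)$ vanishes at the $\ge n$ sign-change points of $\bar s$ in $(-1,1)\setminus\{0\}$, and by \eqref{prop_etoile} it has at most $n$ simple zeros; hence $f-g^{*}$ has exactly $n$ zeros, precisely $n/2$ in each of $(-1,0)$ and $(0,1)$, all simple and none at $0$. It remains to identify $\bar s$ with $\pm s$, where $s$ is the symmetric sign-changes function of Proposition~\ref{prop_hr_cheby2}: once one shows that $f-g^{*}$ also changes sign at the discontinuity $0$ and that $\bar s$ is odd, $\bar s$ is a symmetric sign-changes function with exactly $n+1$ changes orthogonal to $Y$, so $\bar s=\pm s$ by the uniqueness in Proposition~\ref{prop_hr_cheby2}, its $n$ zeros off $0$ are $\alpha_{-n/2},\dots,\alpha_{-1},\alpha_1,\dots,\alpha_{n/2}$, and therefore $g^{*}(\alpha_i)=f(\alpha_i)$ for these indices; these $n$ Lagrange conditions determine $g^{*}$ uniquely in the Chebyshev space $Y$. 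This last identification is the main obstacle, and the genuinely new point compared with the classical continuous setting of Theorem~\ref{th_unicite_cheby}, in which no change of sign is ever forced at a prescribed abscissa: one must show that the unique best approximation places $g^{*}(0)$ strictly between the one-sided limits of $f$ at $0$, so that $\mathrm{sign}(f-g^{*})$ really carries a sign change there and coincides (up to global sign) with $s$; the strategy is to rule out the alternative, namely that $\mathrm{sign}(f-g^{*})$ differs from $s$ by a flip at $0$ — agreeing with $s$ on one of $[-1,0)$, $(0,1]$ and with $-s$ on the other — which would contradict orthogonality of $\bar s$ to the even subspace $Z$ (and hence Theorem~\ref{th_caract}). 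The hypotheses $n$ even and $\mathrm{dim}\,Z=n/2$ are precisely what make this dichotomy, and the symmetry argument of Proposition~\ref{prop_hr_cheby2}, available.
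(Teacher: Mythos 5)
Your Steps 1--2 follow the same basic strategy as the paper's uniqueness argument (force at least $n$ common interpolation points out of the sign structure, then invoke the Chebyshev property of $Y$), and your Step 1 in fact justifies more carefully than the paper does why the sign-changes function must have $n/2$ sign changes on each side of the origin. But there is a genuine gap: the argument silently assumes $\nu(Z(f-g^*))=0$. In Step 2 the identity $\|f-\tfrac{g_1+g_2}{2}\|_1=\tfrac12\|f-g_1\|_1+\tfrac12\|f-g_2\|_1$ yields only $(f-g_1)(f-g_2)\ge 0$ $\nu$-a.e.; it does not give $\mathrm{sign}(f-g_1)=\mathrm{sign}(f-g_2)=\bar s$ $\nu$-a.e.\ when one of the zero sets has positive $\nu$-measure, and in that case the Phelps sign-changes function is undetermined on the zero set, so its sign-change points need not be points where $f-g_2$ (or even $f-g_1$, essentially) changes sign. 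The paper devotes the entire second half of its uniqueness proof to exactly this case: it uses convexity of the set of best approximations, the family $\psi(\lambda,x)=f(x)-\lambda g_1(x)-(1-\lambda)g_2(x)$, and Lemma 4.7 of Rice to produce $\lambda_1,\lambda_2$ whose zero sets overlap in a set of positive measure, hence in at least $n$ points. Your proposal needs this case (or some substitute for it) to be complete.

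Second, Step 3 is a statement of the problem rather than a proof. You correctly isolate the crux --- that $\mathrm{sign}(f-g^{*})$ must be odd and must change sign at the jump in order to be identified with the function $s$ of Proposition~\ref{prop_hr_cheby2} --- but you only announce a ``strategy'' and do not carry it out. Moreover the dichotomy you propose (either $\bar s=\pm s$, or $\bar s$ agrees with $s$ on one side of $0$ and with $-s$ on the other) is not exhaustive: a priori the zeros of $f-g^{*}$ need not be symmetrically placed, so $\bar s$ need be neither odd nor even; and the contradiction you claim for the ``flipped'' case is not immediate, since an even step function with $n/2$ sign changes per side can perfectly well be orthogonal to the $(n/2)$-dimensional Chebyshev space $Z|_{(0,1]}$ --- that is precisely a Hobby--Rice configuration on $[0,1]$. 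The paper argues in the opposite direction, which sidesteps this analysis: it takes the (unique, symmetric) canonical points $\alpha_i$ from Proposition~\ref{prop_hr_cheby2}, constructs $g^{*}\in Y$ interpolating $f$ there, uses hypothesis \eqref{prop_etoile} to conclude that $f-g^{*}$ vanishes only at the $\alpha_i$, and then certifies optimality via Theorem~\ref{th_caract}, after which part one identifies this $g^{*}$ as the best approximation. If you keep your direction of argument, the sign analysis at $0$ must actually be supplied, not merely described.
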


\begin{proof}
The scheme of the unicity proof is derived from a proof of Rice in \cite{Rice1964Book} (Theorem 4.4, page 109). Assume that $g_1$, a best $L_1$ approximation of $f$ in $Y$, is such that $\nu(Z(f-g_1))=0$. Since $n$ is even, by the Proposition \ref{prop_hr_cheby2}, $Z(f-g_1)$ contains at least $n$ elements. Assume also that there exists another best $L_1$ approximation $g_2$. Then, $Z(f-g_2)$ contains at least $n$ elements. We must have (see \cite{Pinkus1989}, page 17) :
\[(f-g_1)(f-g_2)\ge 0, \quad \nu-\text{a.e. on } [-1,1].\]
If there exists $x\in Z(f-g_1)$ such that $g_1(x)\neq g_2(x)$ then $f(x)-g_2(x)\neq 0$. The last inequality cannot hold almost everywhere on $[-1,1]$. So $g_1$ and $g_2$ must be equal on $Z(f-g_1)$, \textit{i.e.}  at at least $n$ points. But $Y$ is $n$-dimensional Chebyshev space on $[-1,1]$ then $g_1-g_2=0$.\\
Assume now that $f$ has two best $L_1$ approximations, $g_1$ and $g_2$ such that $\nu(Z(f-g_1))\neq 0$ and $\nu(Z(f-g_2))\neq 0$. For any $\lambda \in [0,1]$, $\lambda g_1+(1-\lambda)g_2$ is also a best $L_1$ approximation of $f$. Let the function $\psi$ be defined on $[0,1]\times[-1,1]$ by
\[ \psi(\lambda,x)=f(x)-\lambda g_1(x)-(1-\lambda)g_2(x).\]
If there exists $\lambda\in [0,1]$ such that $\nu(Z(\psi(\lambda,\cdot))=0$ then we conclude by the first part of the proof.\\
We assume now that for all $\lambda\in [0,1]$, $\nu(Z(\psi(\lambda,\cdot))>0$. We use Lemma 4.7. from \cite{Rice1964Book} page 120. It claims that there exist $\lambda_1$ and $\lambda_2$ such that $\nu(Z(\psi(\lambda_1,\cdot))\cap Z(\psi(\lambda_2,\cdot))\neq 0$.\\
In particular, $Z(\psi(\lambda_1,\cdot))\cap Z(\psi(\lambda_2,\cdot))$ contains at least $n$ points and so :
\[ \lambda_1g_1+(1-\lambda_1)g_2=\lambda_2g_1+(1-\lambda_2)g_2.\]
It comes $g_1=g_2$.\\
We apply the Proposition \ref{prop_hr_cheby2}. There exists a unique sequence of $n$ real values $\alpha_i$ such \eqref{eq_HR_cheby2} holds. From \cite{Zielke1979}, Chapitre 1, page 1, we know that there exists a function $g^*\in Y$ which vanishes at the $\alpha_i$. Since every non trivial element of $span(Y,f)$  has at most $n$ simple zeros, $g^*-f$ vanishes only at the $\alpha_i$. The fonction $g^*$ is the best $L_1$ approximation $L_1$ of $f$ in $Y$.
\end{proof}

 We consider now best $L_1$ approximation by an algebraic polynomial function of degree $n-1$ in $[-1,1]$. The space of such functions is an $n$-dimensional Chebyshev space on $[-1,1]$. Moreover, its canonical base has $\lceil n/2 \rceil$ even functions. So we can apply the Proposition \ref{prop_hr_cheby2}. Moreover we can explicitely determine the $\alpha_i$ values. One can show that for any integer $m \ge \lfloor n/2 \rfloor$, we have :
\begin{equation}
  \forall P \in \mathbf{P}_{n-1}[-1,1], \ \sum_{i=-m}^{m+1}(-1)^i \int_{\alpha_{i-1}}^{\alpha_i} P(x)\ \mathrm{d}x =0,
  \label{eq_HR_cheby_pol}
\end{equation}
for :
\begin{equation}
  \alpha_i=\cos\left(\frac{(m+1-i)\pi}{2m+2}\right),\quad i=-m-1,\dots,m+1.
\end{equation}
\label{prop_HR_pol}
These $\alpha_i,\ i=-m,\dots,m$ are the zeros of the second order Chebyshev polynomial of degree $2m+1$ defined as follows :
  \begin{equation}
    T_{m+1}(\cos(\theta))=\frac{\sin((m+2)\theta)}{\sin(\theta)},\quad \theta \in [-\pi,\pi].
  \end{equation}

Under the condition \eqref{prop_etoile}, these values enable to determine best $L_1$ approximation of certain Heaviside-type functions by a polynomial function. We can for example determine best $L_1$ polynomial approximations of the Heaviside function on $[-1,1]$. In particular, we have illustrated in Figure \ref{fig_P3_0} the unique best cubic approximation of the Heaviside which interpolation these function at $\cos(i\pi/6)$, $i=1,\dots,5$. Indeed, one can show that $span(\mathbf{P}_3,H)$ satisfies the property \eqref{eq_HR_cheby_pol}. Assume to the contrary that there exists a cubic function $g\in\mathbf{P}_3$ such that $g-H$ has five simple zeros on $[-1,1]$. It is impossible that four or all of these zeros lie on $[-1,0]$ or $[0,1]$ because $g$ has at most three zeros. If $g-H$ vanishes three times on $[-1,0]$ (respectively [0,1]) and twice on $[0,1]$ (respectively [-1,0]), then $g$ has two inflexion points, which is impossible for a cubic function.

\section{Best $L_1$ approximation of Heaviside-type functions in a weak-Chebyshev space}

In this section, we first extend Proposition \ref{prop_hr_cheby2} to the case of weak-Chebyshev space.

\begin{proposition}
  Let $Y$ be an $n$-dimensional weak-Chebyshev space on $[-1,1]$ and $Z$ be the subspace of $Y$ composed of the even functions. Assume $\mathrm{dim} \ Z = \lceil n/2 \rceil$. Then there exists a sign-changes function :
 \begin{equation}
s(x)=\sum_{i=-\lfloor n/2 \rfloor}^{\lfloor n/2 \rfloor+1}(-1)^i\mathbf{1}_{[\alpha_{i-1},\alpha_{i}[}(x).
 \end{equation}
   such that for any $g\in Y$ :
\begin{equation}
  \int_{-1}^1 s(x)g(x)\ \mathrm{d}\nu(x) =0, \quad j=1,2,\dots,n,
  \label{eq_HR_wcheby2}
\end{equation}
where :
\[-1=\alpha_{-\lfloor n/2 \rfloor-1}<\alpha_{-\lfloor n/2 \rfloor}<\dots<\alpha_{-1}<\alpha_0=0<\alpha_1<\dots<\alpha_{\lfloor n/2 \rfloor+1}=1,\]
with $\alpha_{-i}=-\alpha_i$, $i=1, \dots,\lfloor n/2 \rfloor$.\\
\label{prop_hr_wcheby2}
\end{proposition}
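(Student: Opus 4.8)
The plan is to run the existence half of the proof of Proposition~\ref{prop_hr_cheby2} almost verbatim, replacing only the Chebyshev interpolation step by its weak-Chebyshev counterpart; note that no uniqueness is asserted here (and it does fail in general for weak-Chebyshev spaces, e.g. spline spaces), so the alternation argument used at the end of that proof is simply dropped. First I would observe that the hypothesis $\dim Z=\lceil n/2\rceil$ gives $q:=n-\dim Z=\lfloor n/2\rfloor$, so Theorem~\ref{th_hr_extended} applies with the admissible value $m=q=\lfloor n/2\rfloor$. It produces an odd sign-changes function $s=\sum_{i=-r_m}^{r_m+1}(-1)^i\mathbf{1}_{[\alpha_{i-1},\alpha_{i}[}$ with $r_m\le\lfloor n/2\rfloor$, with $-1=\alpha_{-r_m-1}<\dots<\alpha_{-1}<\alpha_0=0<\alpha_1<\dots<\alpha_{r_m}<\alpha_{r_m+1}=1$, $\alpha_{-i}=-\alpha_i$, satisfying $\int_{-1}^1 s(x)g(x)\,\mathrm{d}\nu(x)=0$ for every $g\in Y$. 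The whole content of the proof then reduces to promoting $r_m$ to $\lfloor n/2\rfloor$.

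For that step I would argue by contradiction, exactly as in Proposition~\ref{prop_hr_cheby2}. If $r_m<\lfloor n/2\rfloor$, then $s$ changes sign at exactly the $2r_m+1$ points $-\alpha_{r_m},\dots,-\alpha_1,0,\alpha_1,\dots,\alpha_{r_m}$, and $2r_m+1\le 2\lfloor n/2\rfloor-1\le n-1$. Since $Y$ is an $n$-dimensional weak-Chebyshev space, there exists a nonzero $g^{*}\in Y$ that vanishes at these $2r_m+1$ points, changes sign at each of them, and changes sign at no other point (the weak-Chebyshev analogue of the fact used in the Chebyshev case; see \cite{Nurnberger2013} or \cite{Zielke1979}). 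Then $x\mapsto s(x)g^{*}(x)$ keeps a constant sign on $[-1,1]$ and, $g^{*}$ being a nonzero continuous function and $\nu$ a positive measure, it is not $\nu$-almost everywhere zero; hence $\int_{-1}^1 s(x)g^{*}(x)\,\mathrm{d}\nu(x)\neq 0$, contradicting \eqref{eq_HR_wcheby2}. Therefore $r_m=\lfloor n/2\rfloor$, and $s$ has exactly the announced form with the stated strict inequalities.

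The main obstacle, and the only genuinely new ingredient compared with Proposition~\ref{prop_hr_cheby2}, is the existence of the sign-changing element $g^{*}$ with prescribed zeros. In a Chebyshev space an element vanishing at $k\le n-1$ prescribed points is unique up to a scalar and automatically alternates sign across them, so that step is immediate; in a weak-Chebyshev space neither property is automatic — a nonzero element vanishing at the prescribed points may vanish on whole subintervals or fail to change sign at some of them — so one must appeal to the structural theory of weak-Chebyshev spaces rather than to a bare dimension count, and one must be slightly careful to check that $g^{*}$ is nonzero on a set of positive $\nu$-measure before concluding that its integral against $s$ does not vanish. If one prefers to avoid this weak-Chebyshev interpolation lemma, an alternative is to approximate $Y$ by $n$-dimensional Chebyshev spaces with the same even-subspace dimension (cf.\ \cite{Deutsch2003}), apply Proposition~\ref{prop_hr_cheby2} to each, and pass to a subsequential limit of the resulting canonical points; that route yields the orthogonality relations immediately but a priori only with weak inequalities between the $\alpha_i$, and ruling out coalescing nodes brings one back to essentially the same weak-Chebyshev input.
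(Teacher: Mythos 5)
Your route is genuinely different from the paper's. The paper does not invoke Theorem~\ref{th_hr_extended} at all here: it smooths the weak-Chebyshev system into a sequence of Chebyshev systems by convolution with a Gaussian kernel (an operation that preserves parity), applies Proposition~\ref{prop_hr_cheby2} to each smoothed system to get exact symmetric canonical points $\alpha_{i,p}$, and then extracts a convergent subsequence as $p\to\infty$. You instead apply the extended Hobby--Rice theorem directly with $m=\lfloor n/2\rfloor$ (the reduction $q=n-\lceil n/2\rceil=\lfloor n/2\rfloor$ is correct) and try to promote $r_m$ to $\lfloor n/2\rfloor$ by a contradiction argument. That is where your proof has a genuine gap, and it sits exactly at the step you yourself single out as the ``only genuinely new ingredient.''

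The lemma you invoke --- that for $2r_m+1\le n-1$ prescribed points an $n$-dimensional weak-Chebyshev space contains a nonzero element vanishing at those points, changing sign at each of them and at no other point --- is false in general. What the structural theory of weak-Chebyshev spaces actually provides is only weak alternation: a nonzero $g\in Y$ with $(-1)^i g\ge 0$ on each interval $[\alpha_i,\alpha_{i+1}]$. Such a $g$ may vanish identically on whole subintervals; e.g.\ in $Y=\mathrm{span}\{1,(x-1)_+\}$ on $[0,2]$ with the single prescribed point $1/2$, every element vanishing at $1/2$ is a multiple of $(x-1)_+$ and does not change sign there. With only weak alternation, your product $s\,g^{*}$ is merely nonnegative, and since $g^{*}$ can be zero on a set of full $\nu$-measure, $\int_{-1}^{1}s(x)g^{*}(x)\,\mathrm{d}\nu(x)$ can vanish and no contradiction results. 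You flag this caveat in your closing paragraph but do not close it, and closing it is the entire content of the step: for a bare weak-Chebyshev space one cannot in general force $r_m=\lfloor n/2\rfloor$ this way, which is presumably why the paper argues by smoothing instead. (To be fair, the paper's limiting argument yields the orthogonality relations but does not itself rule out coalescence of the limit nodes, so the strict inequalities are not fully justified there either --- your remark about the Chebyshev-approximation alternative applies verbatim to the paper's own proof.) As it stands, though, your argument establishes the conclusion only with $r_m\le\lfloor n/2\rfloor$, not the stated configuration of $2\lfloor n/2\rfloor+1$ distinct interior nodes.
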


\begin{proof}
We prove the result for $n$ being even. The odd case follows in a similar way. Since $\{\phi_j,\ j=1,2,\dots,n\}$ is a weak-Chebyshev system, there exists a sequence of Chebyshev systems $\{\phi_{j,p},\ j=1,2,\dots,n\}_{p\in\mathbf{N}}$ such that :
    \[\lim_{p\rightarrow +\infty} \Vert \phi_j-\phi_{j,p}\Vert_\infty= 0, \quad j=1,2,\dots,n.\]
    Moreover, we can define $\phi_{j,p}$, for $j=1,2,\dots,n$ and $p\in\mathbf{N}$, as follows (see for example \cite{Nurnberger2013}, page 84) :
    \[\phi_{j,p}(t)=\int_\mathbf{R} \phi_j(s)K_p(s,t) \ \mathrm{d}\nu(s),\]
    where $K_p$ is the Gauss kernel. Then if $\phi_j$ is even, so is $\phi_{j,p}$. Indeed, we have :
    \begin{align*}
      \phi_{j,p}(-t) & =\int_\mathbf{R} \phi_j(s)K_p(s,-t) \ \mathrm{d}\nu(s),\\
      & = \int_\mathbf{R} \phi_j(-s)K_p(-s,-t) \ \mathrm{d}\nu(s),\\
      & = \int_\mathbf{R} \phi_j(s)K_p(s,t) \ \mathrm{d}\nu(s),\\
      & = \phi_{j,p}(t).
    \end{align*}
    From Proposition \ref{prop_hr_cheby2}, we can find sequences $\{\alpha_{i,p}\}_{p\in\mathbf{N}}$ defined on $[-1,1]$ for $i=-m',\dots,m'$ with $\alpha_{-j,p}=\alpha_{j,p}$ for $j=1,\dots,m'$ and $p\in\mathbf{N}$  such that :
    \[
    \sum_{i=-m'}^{m'+1} (-1)^i\int_{\alpha_{i,p}}^{\alpha_{i+1,p}} \phi_{j,p}(x)\ \mathrm{d}\nu(x)=0, \quad \text{for } j=1,2,\dots,n.
    \]
    In particular, $\alpha_{0,p}=0$ and $\alpha_{m'+1,p}=1$ for all $p\in \mathbf{N}$.\\
    Since $[-1,1]$ is compact, for $i=1,2,\dots,n$, $\{\alpha_{i,p}\}_{m'\in\mathbf{N}}$ admits a convergent subsequence and we denote the limit by $\alpha_i$. It follows that :
        \begin{equation}
    \sum_{i=-m'}^{m'+1}(-1)^i \int_{\alpha_{i-1}}^{\alpha_i} \phi_j(x)\ \mathrm{d}\nu(x) =0, \quad j=1,2,\dots,n.
\end{equation}
The proposition follows.
\end{proof}

\noindent The next proposition is a direct consequence of Proposition \ref{prop_hr_wcheby2}.\\

\begin{proposition}
  Let $Y$ be $n$-dimensional weak-Chebyshev space on $[-1,1]$ such that $\mathrm{dim} Z = \lceil n/2 \rceil$ basis functions being even. A best $L_1$ approximation of a Heaviside-type function $f \in \mathcal{J}_0[-1,1]$ from $Y$ interpolates $f$ at at least $n$ points if $n$ is even and $n-1$ points if $n$ is odd.
\end{proposition}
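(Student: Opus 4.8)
The plan is to extract the interpolation points of any best approximation from the sign-changes function attached to it by the characterization theorem, and to bound their number by means of Proposition~\ref{prop_hr_wcheby2}.

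Let $g^{*}$ be a best $L_{1}$ approximation of $f$ in $Y$. By Theorem~\ref{th_caract} and the Phelps-type refinement recalled in Section~1, there is a sign-changes function $\bar s$, with interior nodes $\beta_{1}<\cdots<\beta_{k}$, such that $\int_{-1}^{1}\bar s\,g\,\mathrm d\nu=0$ for all $g\in Y$ and $\bar s=\mathrm{sign}(f-g^{*})$ $\nu$-a.e.\ on $[-1,1]\setminus Z(f-g^{*})$. Since $g^{*}\in Y\subset C^{0}[-1,1]$ and $f\in\mathcal{J}_{0}[-1,1]$, the difference $f-g^{*}$ is continuous on $[-1,1]\setminus\{0\}$, so at each node $\beta_{j}\neq 0$ of $\bar s$ it changes sign and hence (being continuous there) vanishes at $\beta_{j}$ itself, or on a subinterval shrinking to it. Thus $g^{*}$ interpolates $f$ at at least as many points as the number of nodes of $\bar s$ lying in $(-1,1)\setminus\{0\}$, while the discontinuity of $f$ at $0$ accounts for at most one further node of $\bar s$. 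The statement therefore reduces to showing that $\bar s$ has at least $2\lfloor n/2\rfloor$ nodes away from $0$ — equivalently, at least $2\lfloor n/2\rfloor+1$ nodes when $0$ is itself a node of $\bar s$, and at least $2\lfloor n/2\rfloor$ otherwise.

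For this I would run the even/odd reduction already used in Proposition~\ref{prop_hr_wcheby2}. Write $Y=Z\oplus W$ with $\dim W=q=n-\lceil n/2\rceil=\lfloor n/2\rfloor$. Because $\nu$ is symmetric, the annihilation relations for $g\in Y$ split, in terms of the even and odd parts $\bar s_{e},\bar s_{o}$ of $\bar s$ and $g_{e},g_{o}$ of $g$, into $\int_{0}^{1}(\bar s_{e}g_{e}+\bar s_{o}g_{o})\,\mathrm d\nu=0$; restricting to $g\in W$ and using that the map $g\mapsto g_{o}$ is injective on $W$ (a consequence of the maximality hypothesis $\dim Z=\lceil n/2\rceil$) singles out a $\lfloor n/2\rfloor$-dimensional weak-Chebyshev space on $[0,1]$ that $\bar s_{o}|_{[0,1]}$ must annihilate. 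The rigidity argument of Proposition~\ref{prop_hr_cheby2}, pushed through the perturbation-and-limit device of Proposition~\ref{prop_hr_wcheby2}, then prevents $\bar s$ from having fewer than $\lfloor n/2\rfloor$ nodes in $(0,1)$; by symmetry the same bound holds on $(-1,0)$, which is the required $2\lfloor n/2\rfloor$ nodes off $0$. Hence $g^{*}$ interpolates $f$ at at least $2\lfloor n/2\rfloor$ points, that is $n$ when $n$ is even and $n-1$ when $n$ is odd. The degenerate case $\nu(Z(f-g^{*}))>0$ is reduced to the above by the convexity argument of Theorem~\ref{th_cheby_saut}: averaging two best approximations and applying Lemma~4.7 of \cite{Rice1964Book}.

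The main difficulty is precisely this last node count. One has to be sure that a best approximation cannot "waste" sign changes, i.e.\ that the sign-changes function attached to $g^{*}$ genuinely carries $\lfloor n/2\rfloor$ nodes strictly inside $(0,1)$ and $\lfloor n/2\rfloor$ inside $(-1,0)$, in addition to the node the jump at $0$ may contribute — the parity subtlety being exactly why the even case yields $n$ points and the odd case only $n-1$. In the Chebyshev case this is the determinant/rigidity argument of Proposition~\ref{prop_hr_cheby2}; for weak-Chebyshev $Y$ it must be carried through a sequence of perturbing Chebyshev systems as in Proposition~\ref{prop_hr_wcheby2}, with care that the positive canonical points neither coalesce nor escape to $\pm 1$ in the limit, and that the presence of the cross term $\int_{0}^{1}\bar s_{e}g_{e}\,\mathrm d\nu$ (since $\bar s$ need not be odd) does not break the reduction. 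A further technical point, harmless when $\nu(Z(f-g^{*}))=0$, is the passage from "$\bar s$ is determined only off $Z(f-g^{*})$" to a statement about its genuine nodes; the positive-measure case is covered by the separate argument mentioned above.
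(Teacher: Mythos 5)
Your reduction of the statement to a node count --- extract the sign-changes function $\bar s$ attached to $g^{*}$ by the Phelps-type characterization, observe that $f-g^{*}$ is continuous off $0$ and must therefore vanish at every node of $\bar s$ other than $0$, and conclude provided $\bar s$ has at least $2\lfloor n/2\rfloor$ nodes in $(-1,1)\setminus\{0\}$ --- is sound, and it is evidently what the authors mean when they call the proposition ``a direct consequence of Proposition~\ref{prop_hr_wcheby2}'' (the paper supplies no proof at all). Two remarks on that part: the case $\nu(Z(f-g^{*}))>0$ is trivial rather than delicate, since a set of positive non-atomic measure is infinite and every point of $Z(f-g^{*})$ is an interpolation point, so the convexity/averaging detour is unnecessary; and your bookkeeping for the node at $0$ (it may be absorbed by the jump and yield no interpolation point) is exactly the parity that produces $n$ versus $n-1$.

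The genuine gap is the node count itself, which you rightly flag as the main difficulty but do not close, and the mechanism you propose for closing it does not work. Proposition~\ref{prop_hr_wcheby2} is a pure existence statement obtained by perturbing to Chebyshev systems and passing to a limit by compactness: in the limit the points $\alpha_{i,p}$ may coalesce or migrate to $0$ or $\pm 1$, so that proposition does not even certify that the particular annihilating sign-changes function it constructs has $2\lfloor n/2\rfloor$ distinct nonzero nodes, let alone that \emph{every} sign-changes function annihilating $Y$ --- in particular the $\bar s$ attached to an arbitrary best approximation --- has that many. Nor can the rigidity argument of Proposition~\ref{prop_hr_cheby2} be ``pushed through the limit'': it rests on the existence, in a Chebyshev space, of an element vanishing exactly at prescribed points and changing sign there, which is precisely what a weak-Chebyshev space fails to provide (an element with the right weak sign pattern may vanish identically on the intervals that carry the mass of $\bar s$, destroying the contradiction $\int \bar s\,g\,\mathrm{d}\nu\neq 0$). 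What is needed, and what neither your proposal nor the paper supplies, is a lower bound on the number of sign changes of an arbitrary annihilator of $Y$, which requires some non-degeneracy of $Y$ (for instance, no nonzero element vanishing on a subinterval, as in Micchelli's treatment of canonical points for weak-Chebyshev systems) and an argument applied to $\bar s$ itself rather than to a perturbed basis. As written, your proposal is a correct reduction followed by an unproven claim.
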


\section{Application to Hermite polynomial splines with fixed knots}
Let us define firsty these polynomial splines.
\begin{definition}
A Hermite spline of order $k$ with nodes $\mathbf{x}=\{x_1<x_2<\dots<x_n\}$ is a $C^k$-continuous function which is a polynomial function of degree lower or equal to  $2k+1$ on each interval $[x_i,x_{i+1}]$. We denote $\tilde{\mathcal{S}}_{k,\mathbf{x}}$ (or simply $\tilde{\mathcal{S}}_k$) the vectorial space of such functions.
\end{definition}
\noindent A basis of $\tilde{\mathcal{S}}_k$ is given in the next proposition.
  \begin{proposition}
 Let $\mathbf{x}=\{x_1<x_2<\dots<x_n\}$. Consider the following $n(k+1)$ functions :
  \begin{equation}
    \begin{split}
      p_j(x) & =(x-x_1)^j, \quad j=0,1,\dots,2k+1,\\
      \varphi_{j,q}(x) & =(x-x_j)_+^q,\quad j=2,3,\dots,n-1;\ q=k+1,k+2,\dots,2k+1.
          \end{split}
          \label{basis}
  \end{equation}
   Then, the set $\{p_j\}_{j=0,\dots,2k+1}\cup \{\varphi_{j,q}\}_{j=1,\dots,n-1}^{q=k+1,\dots,2k+1}$  is a basis of $\tilde{\mathcal{S}}_{k,\mathbf{x}}$ and $\text{dim }\tilde{\mathcal{S}}_{k,\mathbf{x}}=n(k+1)$.\\
   \label{prop_basis}
\end{proposition}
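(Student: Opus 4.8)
The plan is to prove directly that the listed family belongs to $\tilde{\mathcal{S}}_{k,\mathbf{x}}$, is linearly independent, and spans it; once it is known to be a basis, the dimension is read off by counting. There are $2k+2$ functions $p_j$ ($j=0,\dots,2k+1$) and $(n-2)(k+1)$ functions $\varphi_{j,q}$ ($j=2,\dots,n-1$, $q=k+1,\dots,2k+1$), hence $2(k+1)+(n-2)(k+1)=n(k+1)$ functions in all, which is the asserted dimension.

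Membership and independence are the easy part. Each $p_j$ is a polynomial of degree $\le 2k+1$, so it is $C^{\infty}$ and of degree $\le 2k+1$ on every subinterval. Each $\varphi_{j,q}(x)=(x-x_j)_+^q$ with $q\ge k+1$ is identically $0$ on $[x_1,x_j]$ and equals the polynomial $(x-x_j)^q$ (degree $q\le 2k+1$) on $[x_j,x_n]$; since $t\mapsto t_+^q$ is of class $C^{q-1}$ and $q-1\ge k$, it is $C^k$ across the knot $x_j$, so $\varphi_{j,q}\in\tilde{\mathcal{S}}_{k,\mathbf{x}}$. For independence, suppose $\sum_j a_jp_j+\sum_{j,q}b_{j,q}\varphi_{j,q}\equiv 0$ on $[x_1,x_n]$. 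On $[x_1,x_2]$ every $\varphi_{j,q}$ vanishes ($j\ge 2$), leaving $\sum_j a_j(x-x_1)^j\equiv 0$ on a nondegenerate interval, so all $a_j=0$. Arguing by induction on $i$: for $2\le i\le n-1$, on $[x_i,x_{i+1}]$ the $\varphi_{j,q}$ with $j>i$ vanish and those with $j<i$ already have zero coefficients, so $\sum_{q=k+1}^{2k+1}b_{i,q}(x-x_i)^q\equiv 0$ there, forcing $b_{i,q}=0$.

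Spanning is proved by peeling off pieces from left to right. Given $s\in\tilde{\mathcal{S}}_{k,\mathbf{x}}$, write its restriction to $[x_1,x_2]$ as the (unique) polynomial $\sum_{j=0}^{2k+1}a_j(x-x_1)^j$ and set $s_1:=s-\sum_j a_jp_j$, which lies in $\tilde{\mathcal{S}}_{k,\mathbf{x}}$ and vanishes on $[x_1,x_2]$. Inductively, if $s_{i-1}\in\tilde{\mathcal{S}}_{k,\mathbf{x}}$ vanishes on $[x_1,x_i]$ for some $2\le i\le n-1$, its restriction to $[x_i,x_{i+1}]$ is a polynomial $P_i$ of degree $\le 2k+1$; because $s_{i-1}$ is $C^k$ and is identically $0$ just to the left of $x_i$, one has $P_i^{(\ell)}(x_i)=0$ for $\ell=0,\dots,k$, and Taylor's formula gives $P_i(x)=\sum_{q=k+1}^{2k+1}c_{i,q}(x-x_i)^q$ with $c_{i,q}=P_i^{(q)}(x_i)/q!$. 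Putting $s_i:=s_{i-1}-\sum_{q=k+1}^{2k+1}c_{i,q}\varphi_{i,q}$ and using that $\varphi_{i,q}=0$ on $[x_1,x_i]$ and $\varphi_{i,q}=(x-x_i)^q$ on $[x_i,x_{i+1}]$, we get $s_i\in\tilde{\mathcal{S}}_{k,\mathbf{x}}$ vanishing on $[x_1,x_{i+1}]$. After the step $i=n-1$, $s_{n-1}\equiv 0$ on $[x_1,x_n]$, so $s=\sum_j a_jp_j+\sum_{i,q}c_{i,q}\varphi_{i,q}$.

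Hence the family is a basis and $\dim\tilde{\mathcal{S}}_{k,\mathbf{x}}=n(k+1)$. The only point that is not pure bookkeeping is the Hermite-type vanishing fact used in the induction, namely that a polynomial of degree $\le 2k+1$ joining the zero function in a $C^k$ manner at $x_i$ must be a combination of $(x-x_i)^{k+1},\dots,(x-x_i)^{2k+1}$; this is just Taylor's theorem, so I anticipate no genuine difficulty. As a cross-check, the dimension also equals $(n-1)(2k+2)-(n-2)(k+1)=n(k+1)$, the number of free coefficients over the $n-1$ polynomial pieces minus the $k+1$ smoothness conditions imposed at each of the $n-2$ interior knots.
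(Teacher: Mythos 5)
Your proof is correct and follows essentially the same route as the paper's: both peel off the spline knot by knot using the triangular structure of the truncated powers, with the key step being that a degree-$\le 2k+1$ polynomial joining the zero function in a $C^k$ manner at $x_i$ must be a combination of $(x-x_i)^{k+1},\dots,(x-x_i)^{2k+1}$. The only organizational difference is that the paper runs an induction on the number of knots (adjoining the last knot and matching the jump in the derivatives of order $k+1,\dots,2k+1$ at $x_{n-1}$), whereas you sweep left to right within a fixed $n$ and supply a separate, explicit linear-independence argument.
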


\begin{proof}
  First, the functions $\{p_j\}_{j=0,\dots,2k+1}$ and  $\{\varphi_{j,q}\}_{j=1,\dots,n-1}^{q=k+1,\dots,2k+1}$ belong to $\tilde{\mathcal{S}}_k$. It is obvious for $\{p_j\}_{j=0,\dots,2k+1}$ since they are $C^\infty$ polynomial functions of degree lower or equal to $2k+1$. Moreover,
  \[\varphi_{j,q}^{(m)}(x_j)=0, \quad m=0,1,\dots,q-1,\quad q=k+1,k+2,\dots,2k+1,\]
  $\text{and } \varphi_{j,q}^{(q)}(x_j)=q!$. Then the functions $\{\varphi_{j,q}\}_{j=1:n-1}^{q=k+1:2k+1}$ belong to $\tilde{\mathcal{S}}_k$.\\
  We proceed now by induction on the number of knots. For convenience, we define $\mathbf{x}_k=\{x_1,\dots,x_k\}$, $k=1,\dots,n$.\\
   For $n=2$, $\{p_j\}_{j=0,\dots,2k+1}$ is the Taylor basis for $\tilde{\mathcal{S}}_{k,\mathbf{x}_2}$. We assume that the proposition is true for $n-1$ knots.\\
  Let $z\in\tilde{\mathcal{S}}_{k,\mathbf{x}_n}$, then : \[z_{|_{[x_1,x_{n-1}]}}\in\tilde{\mathcal{S}}_{k,\mathbf{x}_{n-1}}.\]
   By the induction hypothesis, there exist unique reals : \[\alpha_0,\dots,\alpha_{2k+1},\beta_{1,q},\dots,\beta_{n-2,q},\  q=k+1,\dots,2k+1,\]
   such that :
  \[
  \tilde{z}(x)=z_{|_{[x_1,x_{n-1}]}}(x)=\sum_{j=0}^{2k+1}\alpha_jp_j(x)+\sum_{j=1}^{n-2}\sum_{q=k+1}^{2k+1}\beta_{j,q}\varphi_{j,q}(x).
  \]
  Then we consider the function : \[\psi=z-\tilde{z}-\sum_{q=k+1}^{2k+1}\beta_{n-1,q}\varphi_{n-1,q},\] for some real values $\beta_{n-1,q}$. For $q=k+1,\dots,2k+1$, we have :
  \begin{align*}
    \psi^{(q)}(x_{n-1}^+) & =z^{(q)}(x_{n-1}^+)-\tilde{z}^{(q)}(x_{n-1}^+)-\beta_{n-1,q}q!\\
    & =z^{(q)}(x_{n-1}^+)-\tilde{z}^{(q)}(x_{n-1}^-)-\beta_{n-1,q}q!\\
    & =z^{(q)}(x_{n-1}^+)-z^{(q)}(x_{n-1}^-)-\beta_{n-1,q}q!.\\
  \end{align*}
  Then we set, for $q=k+1,\dots,2k+1$,
  \[\beta_{n-1,q}=\frac{z^{(q)}(x_{n-1}^+)-z^{(q)}(x_{n-1}^-)}{q!}.\]
  Hence, the property holds for $n$ and the proof is complete.
\end{proof}

\noindent We give an important property of the spline space $\tilde{\mathcal{S}}_k$.
\begin{proposition}
  For any $\mathbf{x}=\{x_1<\dots<x_n\}\in \mathbf{R}^n$ and $k \in \mathbf{N}$, the spline space $\tilde{\mathcal{S}}_{k,\mathbf{x}}$ is an $(n(k+1))$-dimensional weak-Chebyshev space on $[x_1,x_n]$.
\end{proposition}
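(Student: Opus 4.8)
The plan is to build on Proposition~\ref{prop_basis}, which already gives $\mathrm{dim}\,\tilde{\mathcal{S}}_{k,\mathbf{x}}=n(k+1)$, and on the fact that every element of $\tilde{\mathcal{S}}_{k,\mathbf{x}}$ is $C^k$ on $[x_1,x_n]$ and hence continuous; so the only thing to establish is the weak-Chebyshev property, i.e.\ that no $s\in\tilde{\mathcal{S}}_{k,\mathbf{x}}\setminus\{0\}$ has more than $n(k+1)-1$ strong changes of sign on $[x_1,x_n]$. First I would identify $\tilde{\mathcal{S}}_{k,\mathbf{x}}$ with the classical polynomial spline space of degree $2k+1$ (order $2k+2$) whose interior knots $x_2,\dots,x_{n-1}$ each carry multiplicity $2k+1-k=k+1$: the $C^k$ smoothness imposes exactly $k+1$ fewer continuity conditions at each interior knot than a globally $C^{2k}$ function would satisfy, which is precisely what the truncated powers $\varphi_{j,q}$, $q=k+1,\dots,2k+1$, in the basis \eqref{basis} encode. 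The dimension count is consistent, since order plus total interior multiplicity is $(2k+2)+(n-2)(k+1)=n(k+1)$.

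With this identification the statement is the special case of the classical fact that every polynomial spline space with fixed knots is a weak-Chebyshev space on any interval containing its knots; I would cite Karlin--Studden~\cite{Karlin1966}, Schumaker~\cite{Schumaker1981}, or N\"urnberger~\cite{Nurnberger2013}. Concretely one passes to the normalized B-spline basis of $\tilde{\mathcal{S}}_{k,\mathbf{x}}$ and invokes the total positivity of B-spline collocation matrices (Schoenberg--Whitney): for any $t_1<\dots<t_{n(k+1)}$ in $[x_1,x_n]$ the collocation determinant is nonnegative, which by the standard equivalence is exactly the weak-Chebyshev property, and by the variation-diminishing property of B-spline expansions the number of strong sign changes of any $s\neq 0$ is at most $n(k+1)-1$.

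If a self-contained argument is wanted instead, I would prove the sign-change bound by the spline zero-counting theorem: each differentiation lowers the degree by one and the knot multiplicities by one (so $s^{(k)}$ is still a continuous spline, $s^{(k+1)}$ has only simple knots, and so on), and an inductive Rolle-type count of zeros with the usual multiplicity convention gives that a non-trivial element of a spline space of dimension $d$ has at most $d-1$ strong sign changes, here $d=n(k+1)$. I expect this last step to be the main obstacle, and it is the reason the spline zero-counting theorem (or B-spline total positivity) is genuinely needed: the induction on the number of nodes suggested by the proof of Proposition~\ref{prop_basis} — writing $\tilde{\mathcal{S}}_{k,\mathbf{x}_n}$ as the extension of $\tilde{\mathcal{S}}_{k,\mathbf{x}_{n-1}}$ plus $\mathrm{span}\{\varphi_{n-1,q}\}_{q=k+1}^{2k+1}$ — disposes immediately of the case where $s$ vanishes identically on $[x_1,x_{n-1}]$, but it does not close on the complementary case, since a degree-$2k+1$ polynomial piece on $[x_{n-1},x_n]$ may a priori carry up to $2k+1$ sign changes while only $k+1$ extra are permitted. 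One must therefore exploit the $C^k$ gluing at the interior knots globally rather than interval by interval, which is exactly the content of the classical results cited above.
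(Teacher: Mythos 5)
Your proposal is correct, but your primary route is genuinely different from the paper's, and your assessment of what a self-contained proof must cost is slightly pessimistic. You reduce the statement to classical spline theory: identify $\tilde{\mathcal{S}}_{k,\mathbf{x}}$ with the degree-$(2k+1)$ spline space whose interior knots carry multiplicity $k+1$ (your dimension count $(2k+2)+(n-2)(k+1)=n(k+1)$ is right) and then invoke total positivity of B-spline collocation matrices and the variation-diminishing property. That is a valid and more general argument, but it outsources the entire content to Karlin--Studden or Schumaker. The paper instead gives a short elementary argument close to the fallback you sketch, and it needs neither the spline zero-counting theorem with multiplicities nor B-spline total positivity: assuming $s\in\tilde{\mathcal{S}}_{k,\mathbf{x}}\setminus\{0\}$ has $n(k+1)$ strong sign changes, Rolle's theorem applied $k$ times gives $s^{(k)}$ at least $n(k+1)-k=(n-1)k+n$ sign changes; but $s^{(k)}$ is a $C^{0}$ piecewise polynomial of degree $k+1$ on the $n-1$ intervals $[x_i,x_{i+1}]$, so a crude interval-by-interval count bounds its sign changes by $(n-1)(k+1)=(n-1)k+n-1$, a contradiction. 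The point you identify as the obstacle --- that the naive per-interval count on $s$ itself gives $2k+1$ per piece, far too many --- is exactly right, but differentiation resolves it: $k$ derivatives cost only $k$ sign changes (via the $C^{k}$ gluing, which is where the global smoothness enters) while cutting the per-piece degree from $2k+1$ to $k+1$, after which the naive count closes. Your remark that induction on the number of knots (as in the proof of Proposition~\ref{prop_basis}) cannot work here is accurate and is presumably why the paper switches to this differentiation argument; the paper handles $n=2$ (a Chebyshev space) and $k=0$ (polygonal splines) as separate base cases.
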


\begin{proof}
  If $n=2$, $\tilde{\mathcal{S}}_{k,\mathbf{x}_2}=\mathbf{P}_{2k+1}$ which is a Chebyshev space and then is a weak-Chebyshev space.\\
  If $k=0$, we observe that $\tilde{\mathcal{S}}_{k,\mathbf{x}_n}=\mathcal{S}_{1,\mathbf{x}_n}$ which is a weak-Chebyshev space (see for example \cite{Nurnberger2013}, page 95).\\
  We assume now $k>1$, $n>2$ and by contradiction, that $s\in \tilde{\mathcal{S}}_{k,\mathbf{x}_n}$ has $n(k+1)$ changes of sign.
  By the Rolle's theorem, $s'$ has $n(k+1)-1$ changes of sign. Then inductively, $s^{(k)}$ has $n(k+1)-k=(n-1)k+n$ changes of sign.\\
  However, $s^{(k)}$ is a $C^0$ polynomial spline of degree $k+1$. Such a function has at most $(n-1)(k+1)=(n-1)k+n-1$ changes of sign. Thus we have a contradiction. 
\end{proof}

Unfortunately, the basis defined in \eqref{basis} does not satisfy the property of having at least half of its functions being even. However, we can show the following proposition.

\begin{proposition}
  Let $\mathbf{x}=\{-x_n<\dots<-x_1<0<x_1<\dots<x_n\}\in \mathbf{R}^{2n+1}$. Then there exists a basis of the vector space $\tilde{\mathcal{S}}_{k,\mathbf{x}}$ with at least half of its functions being even.
\end{proposition}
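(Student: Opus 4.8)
The plan is to exhibit an explicit symmetric basis adapted to the symmetric knot configuration $\mathbf{x}=\{-x_n<\dots<-x_1<0<x_1<\dots<x_n\}$, which has $2n+1$ knots, so that $\dim\tilde{\mathcal{S}}_{k,\mathbf{x}}=(2n+1)(k+1)$. The key observation is that the derivative of the jump-function $\varphi_{j,q}(x)=(x-x_j)_+^q$ at a positive knot $x_j$ has a natural mirror partner, namely the function $(-x-x_j)_+^q=(-1)^q(x+x_j)_-^q$, supported on $[-1,-x_j]$; forming the even combination $(x-x_j)_+^q+(-1)^q(-x-x_j)_+^q$ and the odd combination $(x-x_j)_+^q-(-1)^q(-x-x_j)_+^q$ splits the pair of truncated powers at $\pm x_j$ into one even and one odd spline, each still lying in $\tilde{\mathcal{S}}_{k,\mathbf{x}}$ by Proposition \ref{prop_basis}. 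Similarly, the polynomial part $\mathbf{P}_{2k+1}$ on the central interval contributes its canonical basis of monomials, which already splits into $k+1$ even and $k+1$ odd monomials; and the knot at $0$ contributes functions $x_+^q$ for $q=k+1,\dots,2k+1$, which pair with $x_-^q=(-1)^q(x_+^q-x^q$ adjusted) to again yield even/odd combinations.

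First I would fix the basis of Proposition \ref{prop_basis} built from the knots in increasing order (using $x_1$ replaced by the leftmost knot $-x_n$ as the base point, or more conveniently re-deriving the basis with base point $0$, which is legitimate since any point can serve as Taylor center on the first interval). Second, I would reorganize the $3k+3$ polynomial-type generators plus the central-knot generators plus the $2nk+2n$ truncated-power generators at $\pm x_j$ into: the even monomials $1,x^2,\dots,x^{2k}$ and the odd monomials $x,x^3,\dots,x^{2k+1}$ (giving $k+1$ even, $k+1$ odd); for the central knot, the functions $x_+^q$ replaced by the pair consisting of the even function $x_+^q+x_-^q=|x|^q$ (when $q$ even) or $x_+^q-x_-^q$ and the odd partner, for $q=k+1,\dots,2k+1$ — here careful bookkeeping of parities of $q$ is needed but each such $q$ contributes exactly one even-parity generator after combining with a monomial if necessary, or one may simply keep $x_+^q$ and its reflection $x_-^q$ and take sum and difference; and for each positive knot $x_j$ and each $q$, the even function $e_{j,q}(x)=(x-x_j)_+^q+(-1)^q(-x-x_j)_+^q$ and the odd function $o_{j,q}(x)=(x-x_j)_+^q-(-1)^q(-x-x_j)_+^q$. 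Third, I would count: the even generators number $(k+1)+\#\{\text{central even}\}+n(k+1)$ and verify this is at least half of $(2n+1)(k+1)$, which it is with room to spare since the even count is roughly $(n+1)(k+1)$.

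The main obstacle I anticipate is not the counting but verifying linear independence and spanning of the proposed symmetric family — i.e. that the even/odd combinations $e_{j,q},o_{j,q}$ together with the reflected central-knot truncated powers actually form a basis and not merely a spanning or independent set. To handle this cleanly I would argue that the linear map sending the original basis of Proposition \ref{prop_basis} (in the symmetric knot ordering, with $2n+1$ knots) to the proposed family is given by a block-triangular matrix — the monomial block is the identity on parities, the truncated-power block at $\pm x_j$ is the $2\times 2$ invertible matrix $\left(\begin{smallmatrix}1 & (-1)^q\\ 1 & -(-1)^q\end{smallmatrix}\right)$ applied to the pair $\big((x-x_j)_+^q,(-x-x_j)_+^q\big)$, and one checks that $(-x-x_j)_+^q$ itself lies in the span of the original basis (it is a spline in $\tilde{\mathcal{S}}_{k,\mathbf{x}}$ with a single knot discontinuity at $-x_j$, hence expressible via $\varphi_{\cdot,\cdot}$ at $-x_j$ plus a polynomial). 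Since each block is invertible and the whole matrix is a permutation of block-diagonal invertible blocks, the family is a basis; by construction every member is even or odd, and the even ones number at least $(n+1)(k+1)\ge\tfrac12(2n+1)(k+1)$, completing the proof.
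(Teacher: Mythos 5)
Your construction rests on exactly the same even functions as the paper's proof: the even monomials $1,x^2,\dots,x^{2k}$, the functions $|x|^q$ at the central knot for the appropriate parities of $q$, and the symmetrized truncated powers at $\pm x_j$ (the paper's $\hat\varphi_{j,q}$). The only structural difference is what happens after these are exhibited: the paper checks that they are linearly independent and number $\tfrac12(2n+1)(k+1)$, then invokes the incomplete basis theorem to complete them to a basis, whereas you go on to write down the odd complements explicitly and argue that the change of basis from Proposition \ref{prop_basis} is block-invertible. Your version buys an explicit even/odd splitting of the whole space at the cost of the extra spanning argument; the paper's shortcut makes that half of the work unnecessary.

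Two slips in your writeup need repair, although neither destroys the argument. First, $e_{j,q}(x)=(x-x_j)_+^q+(-1)^q(-x-x_j)_+^q$ is \emph{not} even when $q$ is odd: the reflection of $f(x)=(x-x_j)_+^q$ is $f(-x)=(-x-x_j)_+^q$, so the even combination is $(x-x_j)_+^q+(-x-x_j)_+^q$ with no sign factor; with your $(-1)^q$ one gets $e_{j,q}(-x)=(-1)^q e_{j,q}(x)$, so for odd $q$ both of your combinations are odd and the even count collapses. Second, the knots $\pm x_n$ are the endpoints of the interval $[-x_n,x_n]$, so the basis of Proposition \ref{prop_basis} carries truncated powers only at the $2n-1$ interior knots $\pm x_1,\dots,\pm x_{n-1},0$: there are $(2n-2)(k+1)$ such generators at the paired knots, not $2nk+2n$, hence $(n-1)(k+1)$ even symmetrized pairs, not $n(k+1)$. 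At the central knot only the odd exponents $q$ contribute a new even function ($|x|^q$ with $q$ even is already the monomial $x^q$), giving $\lceil(k+1)/2\rceil$ of them. The corrected even total is therefore $(k+1)+(n-1)(k+1)+\lceil(k+1)/2\rceil=\lceil\tfrac12(2n+1)(k+1)\rceil$ --- exactly half when $k$ is odd, one more than half when $k$ is even --- so the conclusion survives, but your claimed margin of ``at least $(n+1)(k+1)$'' is false and should not be relied upon; the count is tight.
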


\begin{proof}
  Without loss of generality, we may consider $\mathbf{x}=\{-n,-n+1,\dots,n-1,n\}$. The dimension of $\tilde{\mathcal{S}}_{k,\mathbf{x}}$  is $(2n+1)(k+1)$. We would like to find $\lceil(2n+1)(k+1)/2\rceil$ functions being even and independent in $\tilde{\mathcal{S}}_{k,\mathbf{x}}$.
  Then we may apply the incomplete basis theorem.\\
  We prove the proposition when $k$ is odd. The odd case is similar. We consider firstly the $p_j$ functions. We can replace them by the canonical polynomial basis $1,x,\dots,x^{2k+1}$ and select $k+1$ even functions $1,x^2,\dots,x^{2k}$.\\
  Then, consider the functions $\varphi_{j,q}(x)$ with modifications, for $j=1,\dots,n-1$,
  \begin{equation*}
    \hat{\varphi}_{j,q}(x) = \left\{
    \begin{array}{ll}
      (x-j)^q  & \text{if } x\ge j,\\
      (-x-j)^q & \text{if } x\le -j.
    \end{array}
    \right. \quad q=k+1,k+2,\dots,2k+1.
  \end{equation*}
These $(k+1)(n-1)$ functions $\hat{\varphi}_{j,q}$ are even by construction and belong to $\tilde{\mathcal{S}}_{k,\mathbf{x}}$.\\
Finally, we consider the functions $\varphi_{0,q}(x)$ and we modify them as follows :
\begin{equation*}
    \hat{\varphi}_{0,q}(x) =\vert x \vert^q,\quad  \text{for } q \text{ being odd in } \{k+1,k+2,\dots,2k+1\} .
\end{equation*}
These $(k+1)/2$ functions $\hat{\varphi}_{0,q}$ are also even by construction and belong to $\tilde{\mathcal{S}}_{k,\mathbf{x}}$.\\
Therefore, we have defined \[(k+1)+(k+1)(n-1)+(k+1)/2=\frac{1}{2}(2n+1)(k+1)\]
 even functions of $\tilde{\mathcal{S}}_{k,\mathbf{x}}$. They are clearly linearly independent since they have different supports. We may conclude the proof by applying the incomplete basis theorem.
\end{proof}

By applying the results of the prior section, we evidence then best $L_1$ approximations of Heaviside-type functions using polynomial Hermite splines. A closed form of the real values of Proposition \ref{prop_hr_wcheby2} is not currently available. Further work will concentrate on the solvation of \eqref{eq_HR_wcheby2} in the polynomial Hermite spline case. However, we are able to compute them numerically and then define best $L_1$ approximation of the Heaviside function (see Figures \ref{fig_5nodes} and \ref{fig_7nodes}). As we said before, we may have more intersections with the Heaviside-type function than the dimension of the space. We evidence this case in Figure \ref{fig_oscillating_7nodes}. In this case, we have then fourteen intersections while the dimension of the cubic spline space is only ten. Moreover on some Heaviside-type functions, the repartition of the abscissae may not be symmetric. This will be the topic of future research.

\begin{figure}[!h]
  \centering
  \begin{minipage}{0.45\linewidth}
    \centering
    \includegraphics[width=\linewidth]{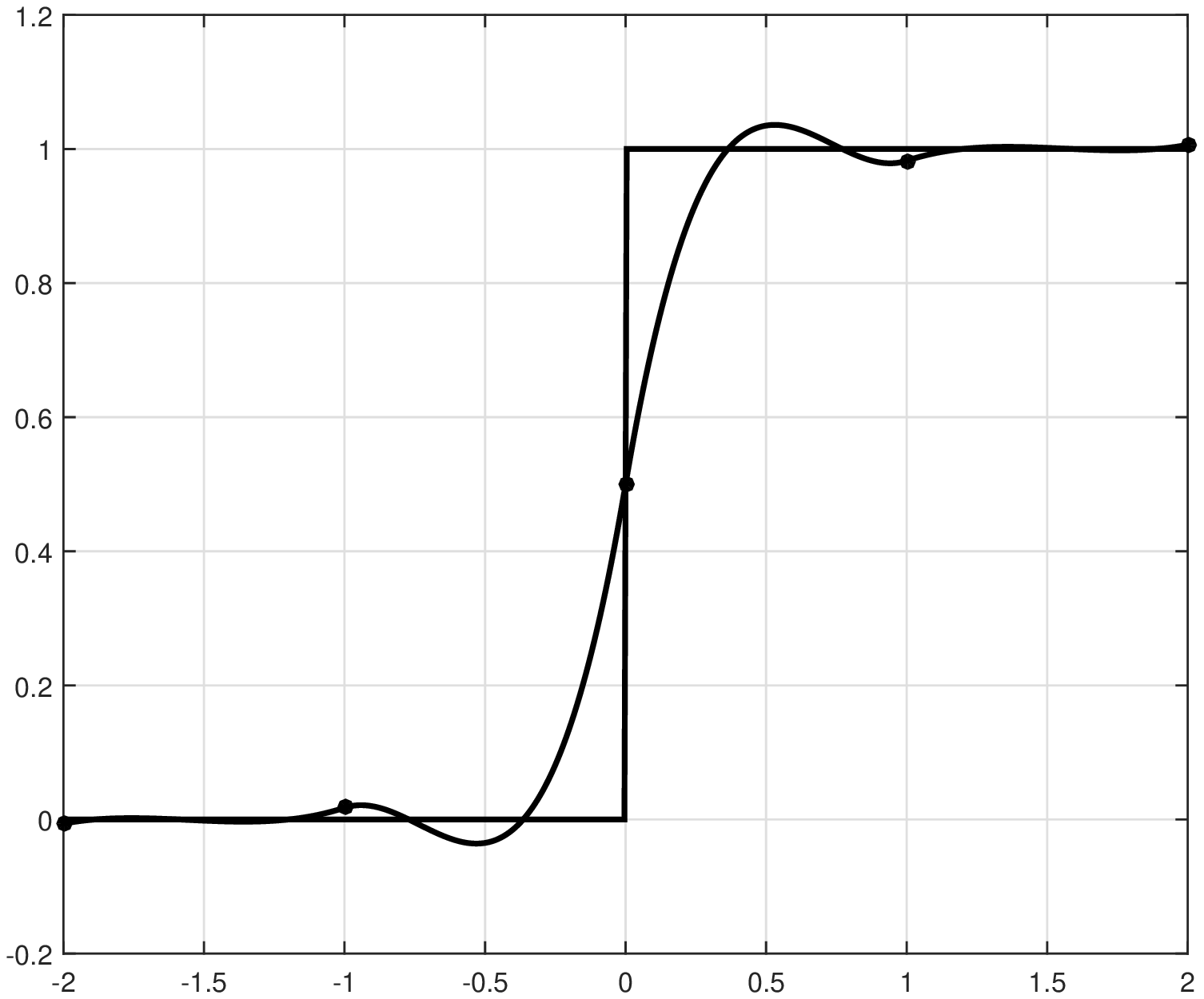}
  \end{minipage}
  \begin{minipage}{0.45\linewidth}
    \centering
    \includegraphics[width=\linewidth]{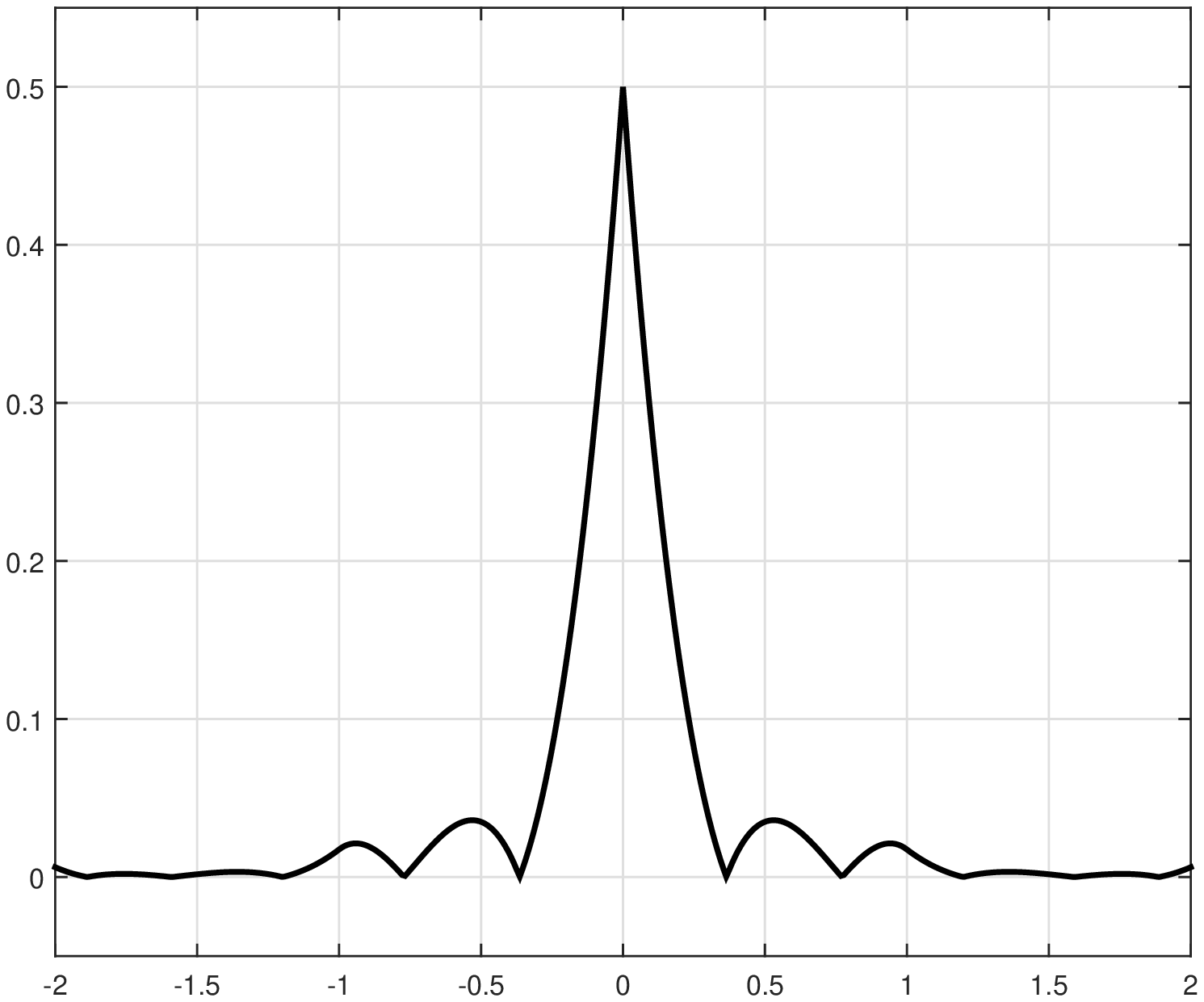}
  \end{minipage}
  \caption{Best $L_1$ approximation of the Heaviside function with five evenly spaced knots and pointwise error graph.}
  \label{fig_5nodes}
\end{figure}
\begin{figure}[!h]
  \centering
  \begin{minipage}{0.45\linewidth}
    \centering
    \includegraphics[width=\linewidth]{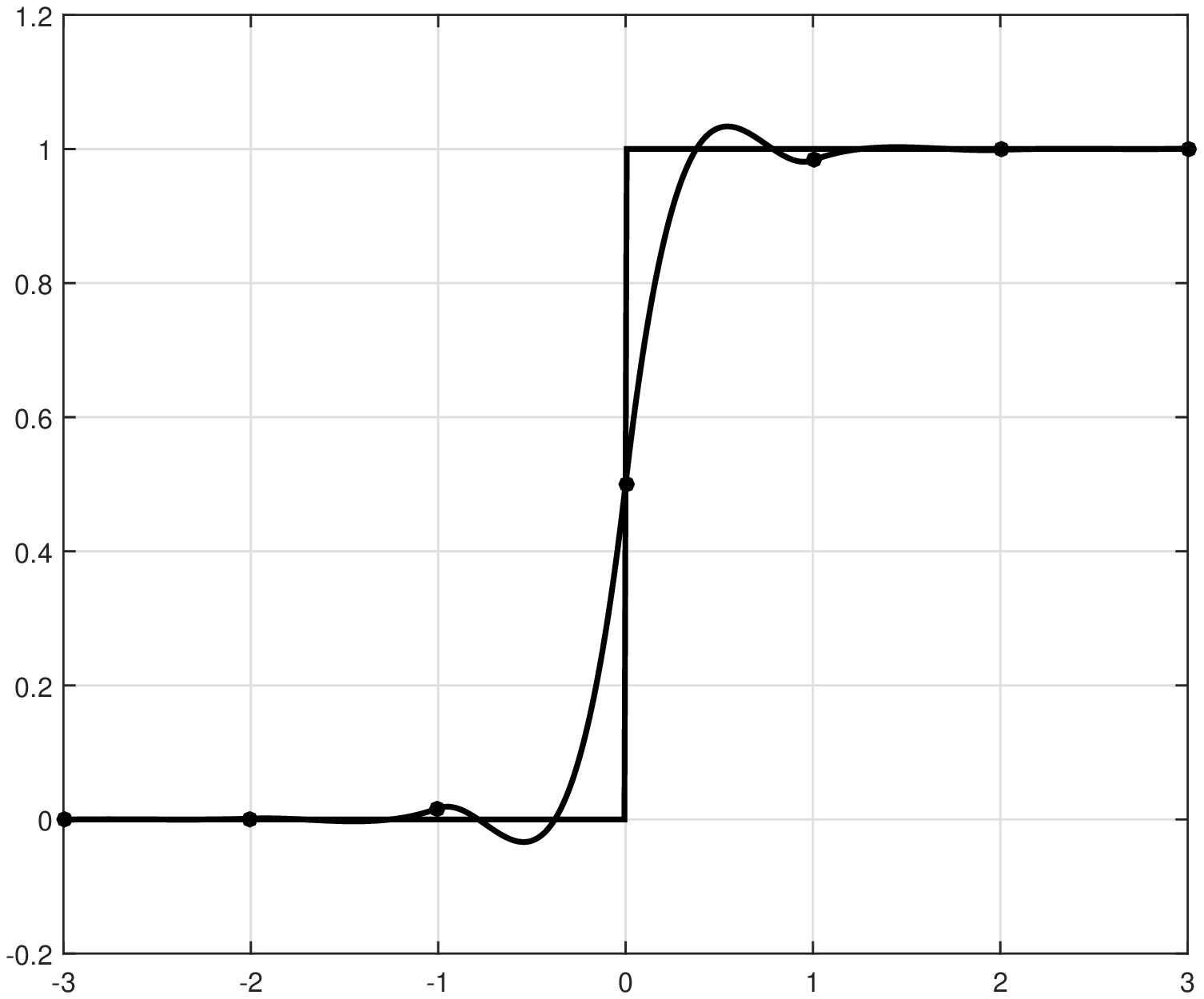}

  7-node solution
  \end{minipage}
   \begin{minipage}{0.45\linewidth}
    \centering
    \includegraphics[width=\linewidth]{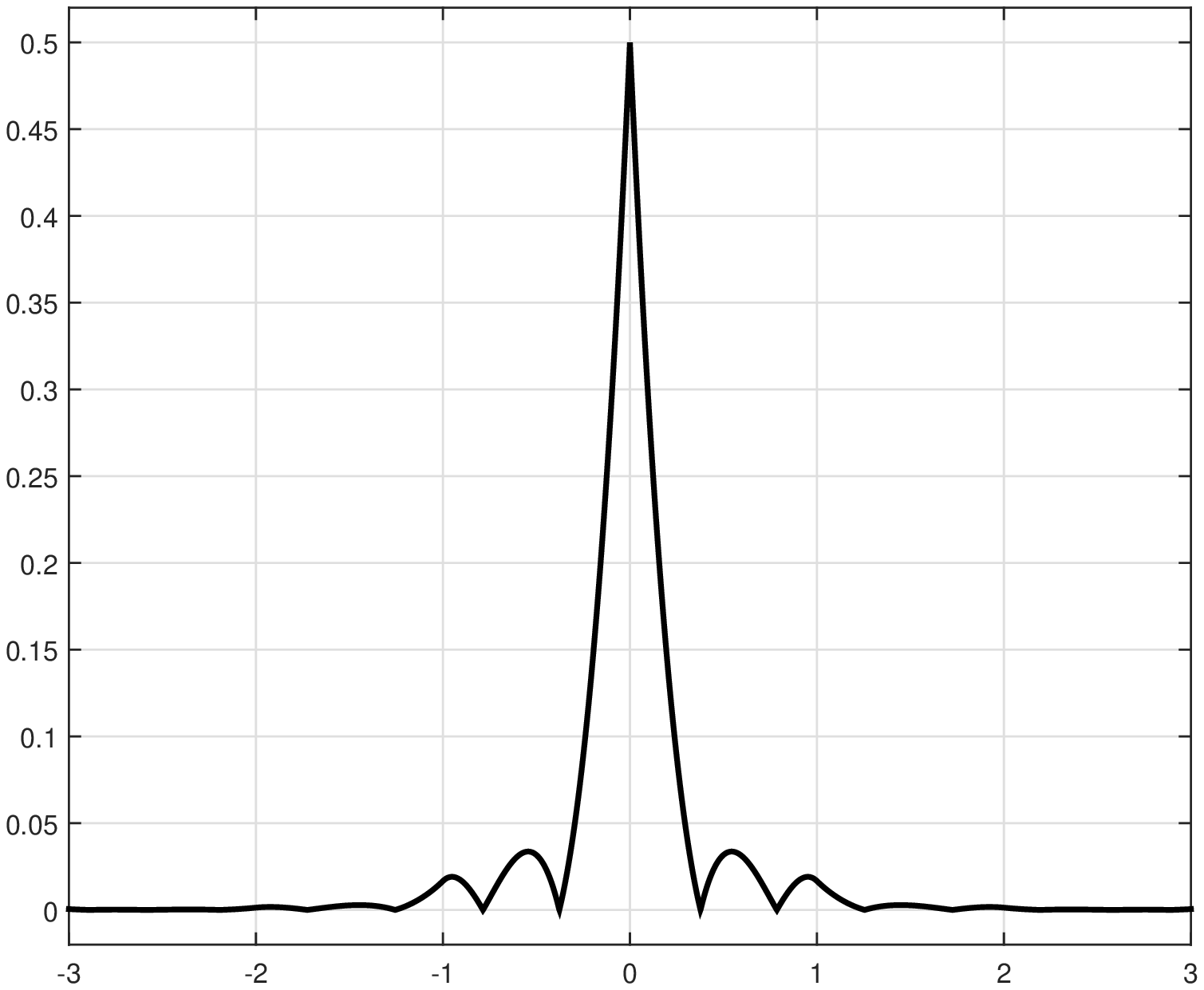}

    Pointwise error
  \end{minipage}
     \begin{minipage}{\linewidth}
    \centering
    \includegraphics[width=\linewidth]{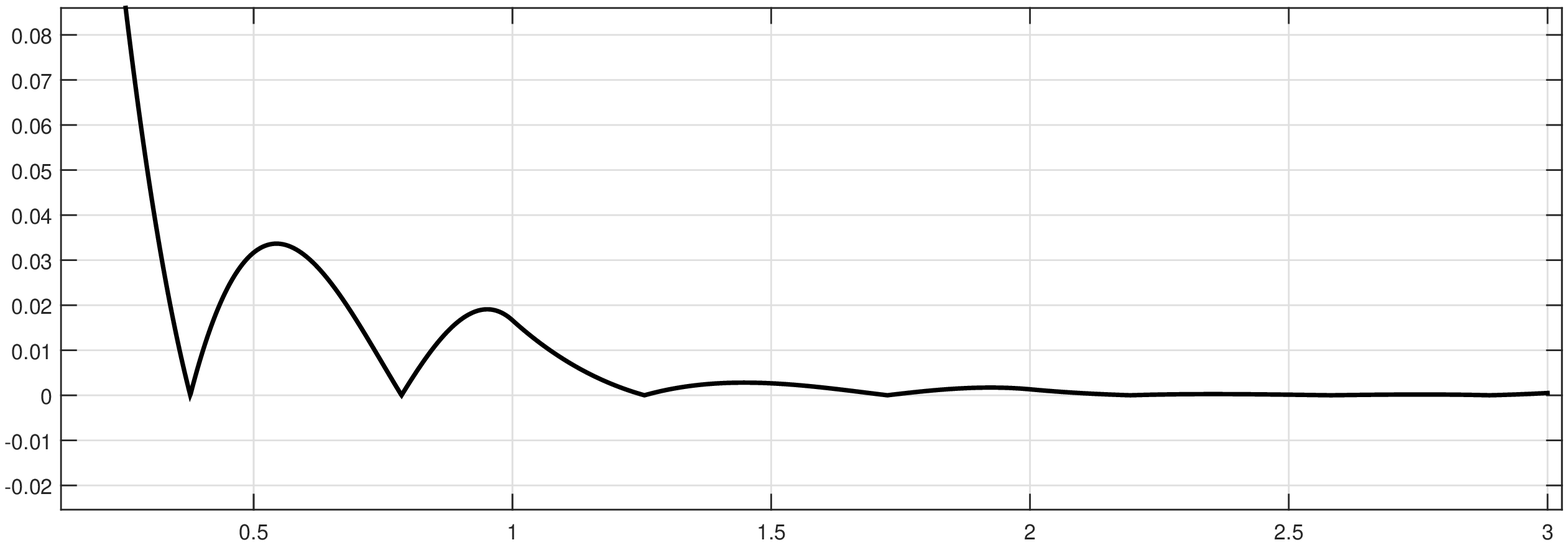}

    Zoom on the pointwise error
  \end{minipage}

  \caption{Best $L_1$ approximation of the Heaviside function with seven evenly spaced knots and
pointwise error graphs.}
\label{fig_7nodes}
\end{figure}

\begin{figure}[!h]
  \centering
  \begin{minipage}{0.45\linewidth}
    \centering
    \includegraphics[width=\linewidth]{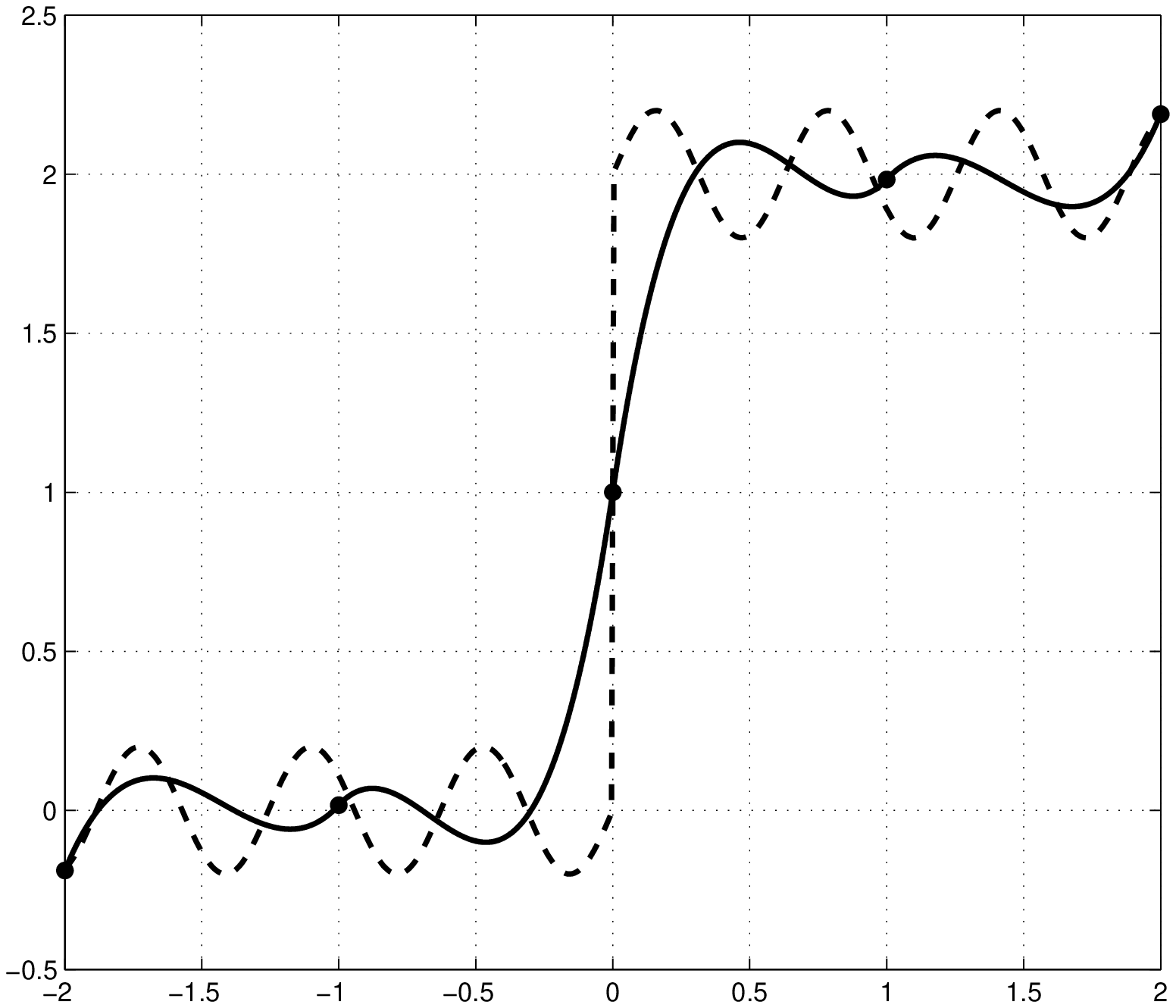}
  \end{minipage}
  \begin{minipage}{0.45\linewidth}
    \centering
    \includegraphics[width=\linewidth]{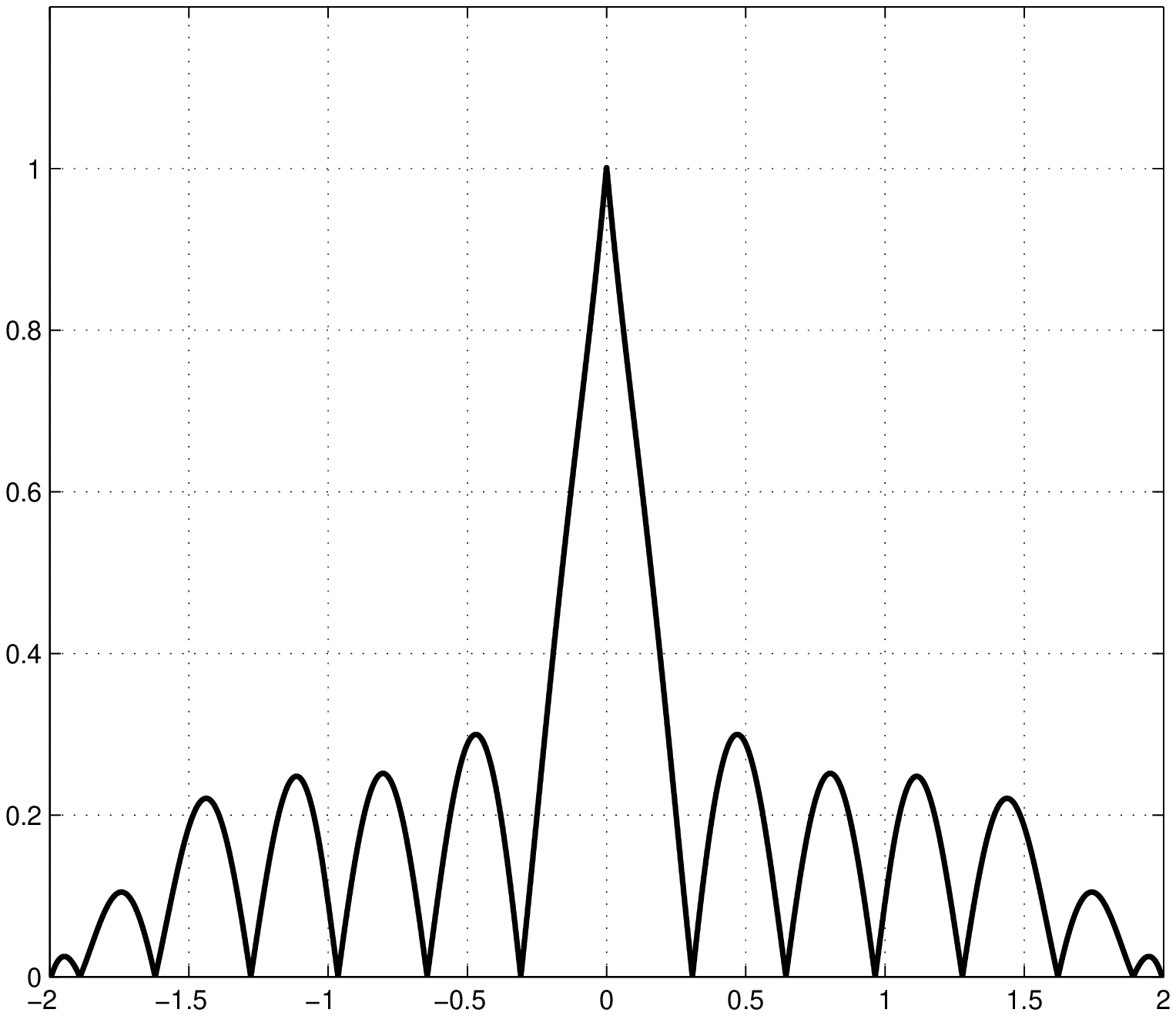}
  \end{minipage}
  \caption{Best $L_1$ approximation (solid line) of an oscillating Heaviside-type function (dashed line) with five evenly spaced knots and pointwise error graph.}
  \label{fig_oscillating_7nodes}
\end{figure}

\section{Conclusion}

In this article, we have studied the problem of best $L_1$ approximation of Heaviside-type functions in Chebyshev and weak-Chebyshev spaces. We have made a supplementary hypothesis on the dimension of the subspace composed of the even functions which is satisfied by some very classical spaces such as polynomials, trigonometric polynomials or spline functions with symmetrically distributed knots. Under that hypothesis, we have proved the uniqueness of best $L_1$ approximation of Heaviside-type functions in a Chebyshev space. Moreover, we have shown that for a class of Heaviside-type functions, the unique best $L_1$ approximation can be obtained by Lagrange interpolation at some abscissae determined in an adapted extension of the Hobby-Rice theorem. This result can be apply to compute best $L_1$ approximations in practical cases such as polynomial or trigonometric polynomial approximations.\\
We have given first results about best $L_1$ approximation of Heaviside-type functions in weak-Chebyshev spaces. We have seen that the minimal amount of intersections between the graphs of a best $L_1$ approximation and the one of its reference function is equal to $n-1$ where $n$ is the dimension of the weak-Chebyshev space. We have studied the practical case of Hermite polynomial splines and evidenced a Gibbs phenomenon for best $L_1$ approximation of the Heaviside function. Further work will focus on the determination of the abscissae in this case.
Future research will concentrate on best $L_1$ approximation of challenging Heaviside-type functions which require an asymmetric distribution of the real values in the extension of the Hobby-Rice theorem. The case of functions with multiple discontinuities is also on study.

%

\newpage
\bibliographystyle{siam}
\bibliography{BibArXiv}

\begin{thebibliography}{10}

\bibitem{Bustamante2012}
{\sc Jorge Bustamante, Jos{\'{e}}~M. Quesada, and Reinaldo
  Mart{\'{\i}}nez{-}Cruz}, {\em Best one-sided ${{L_1}}$ approximation to the
  {{H}}eaviside and sign functions}, Journal of Approximation Theory, 164
  (2012), pp.~791--802.

\bibitem{HobbyRice1965}
{\sc C.R. Hobby and J.R. Rice}, {\em A moment problem in ${L}_1$
  approximation}, Proceedings of the American Mathematical Society, 16 (1965),
  pp.~pp. 665--670.

\bibitem{James1947}
{\sc Robert~C. James}, {\em Orthogonality and linear functionals in normed
  linear spaces}, Transactions of the American Mathematical Society, 61 (1947),
  pp.~pp. 265--292.

\bibitem{Kammler1979}
{\sc D.W. Kammler}, {\em {$L_1$-Approximation of Completely Monotonic Functions
  by Sums of Exponentials}}, SIAM Journal on Numerical Analysis, 16 (1979),
  pp.~30--45.

\bibitem{Karlin1966}
{\sc S.~Karlin and W.J. Studden}, {\em Tchebycheff systems : with applications
  in analysis and statistics}, 1966.

\bibitem{KreinNudelman1977}
{\sc Mark~Grigor'evich Krěin, Adolf~Abramovitch Noudelman, and David
  Louvish}, {\em The Markov moment problem and extremal problems : [ideas and
  problems of {P}. {L}. Čebyšev and {A}. {A}. Markov and their further
  development]}, Translations of mathematical monographs, American Mathematical
  Society, Providence (R.I.), 1977.
\newblock Trad. de : Problema momentov Markova i êkstremalʹnye zadachi.

\bibitem{KripkeRivlin1965}
{\sc {Kripke, B.R. and Rivlin, T.J.}}, {\em {Approximation in the metric of
  $L^1(X,\mu)$}}, {Transactions of the American Mathematical Society}, 119
  (1965), pp.~101--122.

\bibitem{Micchelli1977}
{\sc C.A. Micchelli}, {\em {Best $L^1$ approximation by weak Chebyshev systems
  and the uniqueness of interpolating perfect splines}}, Journal of
  Approximation Theory, 19 (1977), pp.~1--14.

\bibitem{Moskona1995}
{\sc E.~Moskona, P.~Petrushev, and E.~B. Saff}, {\em {The Gibbs phenomenon for
  best $L_1$-trigonometric polynomial approximation}}, Constructive
  Approximation, 11 (1995), pp.~391--416.

\bibitem{Nirenberg1974}
{\sc L.~Nirenberg}, {\em Topics in Nonlinear Functional Analysis: Notes by R.
  A. Artino}, Courant Inst. of Math. Sciences, 1974.

\bibitem{Nurnberger2013}
{\sc G.~N\"{u}rnberger}, {\em {Approximation by Spline Functions}}, Springer,
  2013.

\bibitem{Peherstorfer1979}
{\sc F.~Peherstorfer}, {\em {Trigonometric polynomial approximation in the
  $L_1$-norm}}, Mathematische Zeitschrift, 169 (1979), pp.~261--269.

\bibitem{Phelps1966}
{\sc R.~R. Phelps}, {\em \v{C}eby\v{s}ev subspaces of finite dimension in
  ${{L}}_1$}, Proceedings of the American Mathematical Society, 17 (1966),
  pp.~pp. 646--652.

\bibitem{Pinkus1976b}
{\sc A.~Pinkus}, {\em {A simple proof of the Hobby-Rice theorem}}, Proceedings
  of the American Mathematical Society, 60 (1976), pp.~pp. 82--84.

\bibitem{Pinkus1989}
{\sc Allan Pinkus}, {\em On $L^1$-approximation}, Cambridge tracts in
  mathematics, Cambridge University Press, Cambridge, England, New York, 1989.

\bibitem{Rice1964}
{\sc J.R. Rice}, {\em On the {C}omputation of ${L}_1$ {A}pproximations by
  {E}xponentials, {R}ationals, and {O}ther {F}unctions}, Mathematics of
  Computation, 18 (1964), pp.~pp. 390--396.

\bibitem{Rice1964Book}
\leavevmode\vrule height 2pt depth -1.6pt width 23pt, {\em {The approximation
  of functions. Volume 1, linear theory}}, 1964.

\bibitem{SaffTashev1999}
{\sc E.~B. Saff and S.~Tashev}, {\em Gibbs phenomenon for best ${L}_p$
  approximation by polygonal lines}, East Journal on Approximations, 5 (1999),
  pp.~235 -- 251.

\bibitem{Schumaker1981}
{\sc L.L. Schumaker}, {\em Spline functions : basic theory}, Pure and applied
  mathematics, J. Wiley, New York, Chichester, Brisbane, 1981.

\bibitem{Strauss1984}
{\sc H.~Strauss}, {\em {Best $L_1$-approximation}}, Journal of Approximation
  Theory, 41 (1984), pp.~297--308.

\bibitem{Usow1967}
{\sc K.H. Usow}, {\em {On $L_1$ Approximation I: Computation for Continuous
  Functions and Continuous Dependence}}, SIAM Journal on Numerical Analysis, 4
  (1967), pp.~70--88.

\bibitem{Zielke1979}
{\sc R.~Zielke}, {\em {Discontinuous Chebyshev systems}}, Springer-Verlag,
  Berlin and New York, 1979.

\end{thebibliography}
\end{document}